\documentclass[11pt,a4paper]{scrartcl}
\usepackage[english]{babel}
\usepackage[utf8]{inputenc}
\usepackage[T1]{fontenc}
\usepackage[hidelinks]{hyperref}
\usepackage{subcaption}
\usepackage{paralist} \usepackage{float} \usepackage[ruled]{algorithm2e}
\SetKwInOut{Input}{Input}
\SetKwInOut{Output}{Output}
\usepackage{graphicx} \usepackage{tikz, pgfplots}
\pgfplotsset{compat=1.13}
\usepackage{mathrsfs} \usepackage{mathtools, amssymb, amsthm}
\usepackage{bm} 

\numberwithin{equation}{section}
\numberwithin{table}{section}
\numberwithin{figure}{section}

\newtheorem{theorem}{Theorem}[section]
\newtheorem{lemma}[theorem]{Lemma}
\newtheorem{remark}[theorem]{Remark}

\newtheorem{definition}[theorem]{Definition}

\newtheorem{corollary}[theorem]{Corollary}

\renewcommand{\epsilon}{\varepsilon}
\newcommand{\transp}{^\mathsf T}
\newcommand{\herm}{^\mathsf H}
\newcommand{\inv}{^{-1}}
\newcommand{\m}{_{(x)}}
\newcommand{\f}{\bm f}
\newcommand{\fhat}{\bm{\hat f}}
\newcommand{\ftilde}{\bm{\tilde f}}
\newcommand{\fht}{\bm{\tilde{f}} } \newcommand{\F}{\bm F}
\newcommand{\C}{\mathbb C}
\newcommand{\R}{\mathbb R}
\newcommand{\T}{{\mathbb T^d}}
\newcommand{\e}{\mathrm e}
\renewcommand{\S}{{\mathbb S^2}}

\DeclareMathOperator*{\diag}{diag}
\DeclareMathOperator*{\dist}{dist}
\DeclareMathOperator*{\trace}{trace}

\author{Felix Bartel\footnotemark[1] \and Ralf Hielscher\footnotemark[2] \and Daniel Potts\footnotemark[3]}
\title{Fast Cross-validation in Harmonic Approximation}

\date{} 

\begin{document}

\maketitle

\begin{abstract}
\noindent
Finding a good regularization parameter for Tikhonov regularization problems
is a though yet often asked question.  One approach is to use leave-one-out
cross-validation scores to indicate the goodness of fit.  This utilizes only
the noisy function values but, on the downside, comes with a high
computational cost. In this paper we present a general approach to shift the
main computations from the function in question to the node distribution and,
making use of FFT and FFT-like algorithms, even reduce this cost tremendously
to the cost of the Tikhonov regularization problem itself. We apply this
technique in different settings on the torus, the unit interval, and the
two-dimensional sphere. Given that the sampling points satisfy a quadrature
rule our algorithm computes the cross-validations scores in floating-point precision. In the cases of arbitrarily scattered
nodes we propose an approximating algorithm with the same complexity.
Numerical experiments indicate the applicability of our algorithms.

\noindent {\textit{Keywords and phrases:}}
cross-validation, regularization, fast evaluation of cross-validation score, discrete Fourier transforms, spherical Fourier transform, NFFT

\medskip

{\small \noindent {\textit{2010 AMS Mathematics Subject Classification:}}
\text{
65Txx, 65F22, 42A10. }
}
\end{abstract}
\footnotetext[1]{
Chemnitz University of Technology, Faculty of Mathematics, 09107 Chemnitz, Germany,\\
  felix.bartel@mathematik.tu-chemnitz.de, Phone:+49-371-531-35409, Fax:+49-371-531-835409
  }

\footnotetext[2]{
  Chemnitz University of Technology, Faculty of Mathematics, 09107 Chemnitz, Germany,\\
  hielscher@mathematik.tu-chemnitz.de, Phone:+49-371-531-38556, Fax:+49-371-531-838556
}
\footnotetext[3]{
  Chemnitz University of Technology, Faculty of Mathematics, 09107 Chemnitz, Germany,\\
  potts@mathematik.tu-chemnitz.de, Phone:+49-371-531-32150, Fax:+49-371-531-832150
}

\section{Introduction}

Estimating a good regularization parameter is a frequent problem in
approximation, statistics, and inverse problems. In this paper we restrict
ourselves to the basic example of approximating a function from discrete
function values. To make the setting concrete we fix for a finite index set
$\mathcal I$ a family of  basis functions $\varphi_{\bm n} \colon X \to \C$,
$\bm n \in \mathcal I$ on a domain $X \subset \R^{d}$.  Given a finite set of
nodes $\mathcal X \subset X$ and the corresponding Fourier matrix
\begin{equation*}
  \F
  = \F_{\mathcal X,\mathcal I}
  = (\varphi_{\bm n}(x))_{x \in \mathcal X, \bm n \in \mathcal I},
\end{equation*}
we consider the problem of recovering Fourier coefficients
$\fhat = (\hat f_{\bm n})_{\bm n \in \mathcal I} \in \C^{|\mathcal I|}$ from
noisy data
\begin{equation*}
  \f = (f_x)_{x \in\mathcal X} = \F\fhat+\bm\epsilon \in \C^{|\mathcal X|},
\end{equation*}
where $\bm \epsilon$ is zero mean Gaussian noise. More specifically, we look
for minimizers of the Tikhonov functional
\begin{equation}
  \label{eq:tikhonov}
  J_\lambda\left(\fht\right)
  = \left\|\F\fht-\f\right\|_{\bm W}^2+\lambda\left\|\fht\right\|_{\bm{\hat W}}^2
\end{equation}
and ask for the optimal regularization parameter $\lambda > 0$, where $\|\bm f\|_{\bm W}^2={\bm f}\herm\bm W \bm f$ and $\|\fhat\|_{\bm{\hat W}}^2 = \fhat\herm\bm{\hat W}\fhat$ for the strictly positive diagonal weight matrices $\bm W\in \mathbb R^{|\mathcal X|\times |\mathcal X|}$ and $\bm{\hat W}\in \mathbb R^{|\mathcal I|\times |\mathcal I|}$ in space, respectively frequency domain.

Because of its importance to many practical problems there is a vast
literature on many different strategies to determine an optimal regularization
parameter $\lambda$, e.g. \cite{DPR10,CY10,GoHeWa79,PeSlTk14}.
The idea of so called cross-validation methods is to
divide the set of nodes into a subset used to compute the approximation and a
subset used for validating the goodness of fit. This procedure might be
repeated for different splittings and eventually results for a fixed
regularization parameter $\lambda$ in a cross-validation score. By minimizing
this score an ``optimal'' regularization parameter is found. This approach was
initially proposed by Golub, Heath and Wahba \cite{GoHeWa79} in the setting of
smoothing splines and since then has been applied to a wide range of parameter
estimating problems.

In this paper we consider ``leave-one-out'' cross-validation, i.e., for
fixed regularization parameter $\lambda$ and any node $x \in \mathcal X$ we
compute the minimizer $\bm{\tilde f}_{\lambda,(x)}$ of the functional
\eqref{eq:tikhonov} restricted to the set of nodes
$\mathcal X \setminus \{x\}$ and use
\begin{equation*}
  P(\lambda)
  = \sum_{x \in \mathcal X} \left|\left[\F \bm{\tilde f}_{\lambda,(x)}\right]_x-f_x\right|^2
\end{equation*}
as cross-validation score.
A drawback with a purly data-driven regularization method like this is, that there is no guarantee for a good approximation of the solution of the regularized problem as formulated in the Bakushinski\u{\i} veto, \cite{Bak84}.
Another difficulty of this approach is its numerical
complexity. Indeed, computing $P(\lambda)$ for a single value of $\lambda$
requires solving the minimization problem \eqref{eq:tikhonov}
$\left| \mathcal X\right|$ times, which is too expensive for most
applications.  For spline interpolation on the interval or in higher
dimensional domains different algorithms have been proposed to lower the
computational costs. Those include Monte Carlo approximations \cite{DeGi91},
matrix decomposition methods \cite{Wei07,RoWiBu08} and Krylow space methods
\cite{LuHoAn10}.

For the specific setting of Fourier approximation on the torus $\T$ at regular
lattice points a fast algorithm has been proposed by Tasche and Weyrich
\cite{TaWe96} which requires to solve the minimization problem only once for
each regularization parameter $\lambda$. The idea of this paper is to
generalize the approach in \cite{TaWe96} to arbitrary sampling nodes and other
domains like the unit interval or the two-dimensional sphere.

The paper is organized as follows. In the second chapter we introduce the
necessary notations and prove in Theorem~\ref{theorem:simp1} a representation
of the cross-validation score that depends only on one solution of
\eqref{eq:tikhonov}, but includes the diagonal entries $h_{x,x}$ of the so
called hat matrix
$\bm H = \F (\F\herm \bm W \F + \lambda \bm{\hat W})\inv \F\herm
\bm W$. The efficient approximate computation of those diagonal entries for
different settings is subject to the remaining chapters.

Our essential requirement for the exact fast computation of the diagonal
entries $h_{x,x}$ is that the nodes $\mathcal X$ together with the weights
$\bm W$ form an exact quadrature rule. This requirement is satisfied for
regular tensor product grids and rank-1 lattices
on the $d$-dimensional torus, Chebyshev nodes on the interval $[-1,1]$, and,
e.g., Gauss Legendre nodes on the two-dimensional unit sphere $\mathbb S^{2}$.
The corresponding formulae for the diagonal entries $h_{x,x}$ are given in the
Theorems~\ref{theorem:simplification_torus}, \ref{theorem:i_diagonals}, and
\ref{theorem:s2_diagonals}. Together with fast Fourier algorithms on the torus
\cite{KeKuPo09,nfft3}, for rank-1 lattices \cite{kaemmererdiss,Ka_LFFT}, on the interval \cite{fepo02,nfft3}, and on the
sphere \cite{KeKuPo06b,nfft3} this allows the efficient evaluation of the cross-validation
 score $P(\lambda)$ with a numerical complexity close to
$\mathcal O(|\mathcal I| + |\mathcal X|)$. Numerical examples for all these
settings illustrate our findings.

In the case that no exact quadrature rule is known for the given interpolation
nodes we suggest approximating them by the volume of the corresponding Voronoi
cells. Our numerical tests in Section~\ref{subs:torus_app},
\ref{sec:appr-quadr}, and \ref{sec:appr-quadr-S} indicate a good approximation
of the true cross-validation score, which is much more expensive to compute.  The
\textsc{Matlab} code of our algorithm as well as for all numerical experiments
can be found on the GitHub repository
\texttt{https://github.com/felixbartel/fcv}.

\section{Cross-validation}
\label{sec:loocv}

Lets start this section by reminding that the minimizer of the Tikhonov
functional \eqref{eq:tikhonov} can be given explicitly.

\begin{lemma}
The unique Tikhonov minimizer of \eqref{eq:tikhonov} is
\begin{equation}\label{eq:tikhonov_minimizer}
  \fht_{\lambda} = \left(\F\herm \bm W \F+\lambda\bm{\hat W}\right)\inv \F\herm \bm W \f.
\end{equation}
\end{lemma}

\begin{proof}
  We look for stationary points by calculating the roots of the gradient of
  $J_\lambda$
\begin{equation*}
  \nabla J_\lambda\left(\fht_{\lambda}\right) =
  2\F\herm \bm W\F\fht_{\lambda}-2\F\herm \bm W \f + 2\lambda \bm{\hat W} \fht_{\lambda} \overset != \bm 0.
\end{equation*}
Because $\F\herm\F$ is positive semidefinite, $\bm W,\bm{\hat W}$, and
$\lambda$ are strictly positive we find that
$\F\herm \bm W\F+\lambda\bm{\hat W}$ is positive definite.  In particular it
is invertible such that the stationary point can be written as
$\fht_{\lambda} = (\F\herm \bm W\F+\lambda\bm{\hat W})\inv\F\herm \bm W\f$.  Using the
positive definiteness we see that $\fht_{\lambda}$ fulfills the required minimizing
property.
\end{proof}

\begin{remark}
  Since for random nodes $\mathcal X$ and many important function
    systems the matrix $\bm F\herm\bm W\bm F$ is invertible with probability
    one, see e.g. \cite{BaGr03}, we may relax the assumption on the frequency
    weights $\hat w_n$ to be only non negative. Especially, the zero order
    frequency weight is often set to zero to avoid penalizing constant
    functions. \end{remark}

As the leave-one-out cross-validation score depends on solving
\eqref{eq:tikhonov_minimizer} for sets of nodes of the form
$\mathcal X \setminus \{x\}$ we introduce the following notations for omitting
a single node $x \in \mathcal X$. For $x \in \mathcal X$ and
$\f \in \C^{|\mathcal X|}$ we denote by
\begin{equation*}
  \f_{(x)} = (f_{y})_{y \in \mathcal X \setminus \{x\}} \in \C^{|\mathcal X|-1}
\end{equation*}
the vector of function values $\f$ with one node $x \in \mathcal X$
omitted. Accordingly, we denote by
\begin{equation*}
  \bm F_{(x)}
  = (\varphi_{n}(y))_{y \in \mathcal X\setminus \{x\}, n \in \mathcal I}
  \in\C^{(|\mathcal X|-1)\times|\mathcal I|}
\end{equation*}
the Fourier matrix $\bm F$ with the row corresponding $x \in \mathcal X$
omitted and by
\begin{equation*}
  \bm W_{(x)}
  = (W_{y,y'})_{y,y' \in \mathcal X\setminus \{x\}}
  \in\C^{(|\mathcal X|-1)\times(|\mathcal X|-1)}
\end{equation*}
the restriction of the spatial weight matrix $\bm W$ to the set of nodes
$\mathcal X \setminus \{x\}$. With these notations the minimizer of the
Tikhonov functional \eqref{eq:tikhonov} reduced to the nodes
$\mathcal X \setminus \{x\}$ can be written as
\begin{equation}\label{eq:loo}
  \fht\m
  = \left(\F\m \herm \bm W\m\F\m+\lambda \bm{\hat W}\right)\inv \F\m\herm \bm W\m \f\m
  \in\C^{|\mathcal I|}.
\end{equation}

\begin{definition}
  The \emph{ordinary cross-validation score} for the Tikhonov functional
  \eqref{eq:tikhonov} is defined as
  \begin{equation}\label{eq:cv}
    P(\lambda) = \sum_{x\in\mathcal X} \left|\left[\F\fht\m\right]_x-f_x\right|^2
  \end{equation}
  where $\fht\m$ is defined by \eqref{eq:loo} and $[\F\fht\m]_x$ denotes the entry of $\F\fht\m$ corresponding to the node $x\in\mathcal X$.
\end{definition}

Interpreting \eqref{eq:cv}, we are comparing the predicted or smoothened value
$[\F\fht\m]_{\bm x}$ with the noisy data $f_{\bm x}$ for each node.
They intuitively differ more in the case of under- or oversmoothing.  So its
minimum is a candidate for the smoothing parameter $\lambda$.
Indeed, probabilistic optimality results of the form
$$
  \frac{\|\bm f-\bm F\bm{\tilde f}_{\lambda^\star}\|_2^2}{\inf_{\lambda\ge 0}\|\bm f-\bm F\bm{\tilde f}_{\lambda}\|_2^2}
  \to 1
$$
in probability for $\lambda^\star$ being the minimum of the cross-validation score have been shown for $|\mathcal X|\to\infty$ in \cite{Li86} under the assumption of homogenious noise, i.e., $\epsilon_x$ has the same variance for all $x\in\mathcal X$.
In Theorem~3.5 of \cite{Gu13}, weights were incoporated and the condition was generalized to the case where all $\sqrt{w_x\epsilon_x}$ have a common variance for $x\in\mathcal X$.
These optimality results are with respect to a LOSS function similar to the data-fitting term in the Tikhonov functional \eqref{eq:tikhonov}.
However, in our numerical experiments, we will have a look at the unweighted $L_2$-error as we use the weights $w_x$ for numerical purposes and want to analyze how this disstorts the results.
Unflattering is
the fact that, for a single regularization parameter $\lambda$,
the direct computation of the ordinary cross-validation score requires to solve
$|\mathcal X|$ times the normal equation \eqref{eq:tikhonov_minimizer}.

Our first goal is to relate the solution of the reduced problem \eqref{eq:loo}
to the solution of the full problem \eqref{eq:tikhonov_minimizer}. To this end
we define the matrices
\begin{equation}
  \label{eq:2}
    \begin{split}
    \bm A &= \F\herm \bm W\F+\lambda\bm{\hat W}, \\
    \bm A\m &= \F\m\herm \bm W\m\F\m+\lambda\bm{\hat W}
  \end{split}
\end{equation}
which are decisive for the computation of $\fht$ and $\fht_{(x)}$,
respectively, and show the following relationship between their inverse,
cf. \cite{GoHeWa79}.
\begin{lemma}
  \label{lemma:Sherman-Morrison}
  Let $\bm A$ and $\bm A\m$ be defined as in \eqref{eq:2} and
  \begin{equation*}
    \F_{x,:} = (\varphi_{n}(x))_{n \in \mathcal I}
    \in\C^{1\times|\mathcal I|}
  \end{equation*}
  denote the row of the matrix $\F$ which corresponds to the node $x \in \mathcal
  X$. Then we have
  \begin{equation}
    \label{eq:Ainv}
    \bm A\m\inv
    = \bm A\inv +
    \frac{\bm A\inv w_x \F_{x,:}\herm \F_{x,:} \bm A\inv}
    {1-w_x \F_{x,:} \bm A\inv \F_{x,:}\herm}.
  \end{equation}
\end{lemma}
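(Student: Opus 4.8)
The plan is to recognize $\bm A\m$ as a rank-one modification of $\bm A$ and then apply the Sherman--Morrison formula. First I would use that $\bm W$ is a diagonal matrix to expand the Gram-type matrices as sums of rank-one contributions, one for each node,
\begin{equation*}
  \F\herm \bm W \F = \sum_{y \in \mathcal X} w_y\, \F_{y,:}\herm \F_{y,:},
  \qquad
  \F\m\herm \bm W\m \F\m = \sum_{y \in \mathcal X \setminus \{x\}} w_y\, \F_{y,:}\herm \F_{y,:},
\end{equation*}
so that, recalling the definitions in \eqref{eq:2}, subtraction gives at once
\begin{equation*}
  \bm A\m = \bm A - w_x\, \F_{x,:}\herm \F_{x,:};
\end{equation*}
that is, $\bm A\m$ arises from $\bm A$ by subtracting the rank-one matrix $w_x \F_{x,:}\herm \F_{x,:}$.

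Next I would verify that the scalar $1 - w_x \F_{x,:} \bm A\inv \F_{x,:}\herm$ in the denominator of \eqref{eq:Ainv} does not vanish, which is the only step that genuinely needs an argument. Since $\F\m\herm \F\m$ is positive semidefinite and $\lambda \bm{\hat W}$ is positive definite, $\bm A\m$ is Hermitian positive definite, by the same reasoning that established this for $\bm A$; in particular $\det \bm A\m > 0$ and $\det \bm A > 0$. The matrix determinant lemma applied to the rank-one update then yields
\begin{equation*}
  \det \bm A\m = \det \bm A \cdot \bigl(1 - w_x\, \F_{x,:} \bm A\inv \F_{x,:}\herm\bigr),
\end{equation*}
whence $1 - w_x \F_{x,:} \bm A\inv \F_{x,:}\herm > 0$.

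Once the denominator is known to be non-zero, I would conclude \eqref{eq:Ainv} directly from the Sherman--Morrison identity for $(\bm A - \bm u \bm v\herm)\inv$ with the choice $\bm u = w_x \F_{x,:}\herm$ and $\bm v = \F_{x,:}\herm$; alternatively, one checks it by multiplying the claimed right-hand side of \eqref{eq:Ainv} by $\bm A\m = \bm A - w_x \F_{x,:}\herm \F_{x,:}$ and observing that the cross terms cancel precisely because of this choice of denominator. I expect no real obstacle here: the rank-one reduction in the first step and the non-vanishing of the denominator are the two substantive points, and both are dispatched as above, with the remainder being the routine bookkeeping of a standard rank-one inverse update.
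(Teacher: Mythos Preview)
Your proposal is correct and follows essentially the same route as the paper: identify $\bm A\m = \bm A - w_x \F_{x,:}\herm \F_{x,:}$ as a rank-one perturbation of $\bm A$ and invoke the Sherman--Morrison formula. You are in fact more careful than the paper here, since you also justify that the denominator $1 - w_x \F_{x,:}\bm A\inv \F_{x,:}\herm$ is non-zero (indeed positive) via the matrix determinant lemma, a point the paper only addresses later when proving $h_{x,x}<1$.
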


\begin{proof}
  The assertion of the lemma follows immediately by applying the Sherman-Morrison formula to
  \begin{equation*}
    \bm A\m
    = \bm A-w_x \F_{x,:}\herm \F_{x,:}.
  \end{equation*}
\end{proof}

Our next goal is to make the repetitive solving of the normal equation
\eqref{eq:loo} in \eqref{eq:cv} independent of the right-hand side $\f$. To
this end we define the so called \emph{hat matrix}
\begin{equation}
  \label{eq:hat}
  \bm H  \coloneqq \F \bm A \inv\F\herm \bm W
  = \F\left(\F\herm \bm W\F+\lambda\bm{\hat W}\right)\inv\F\herm \bm W
\end{equation}
which when applied to a data vector $\f$ solves the normal equation
\eqref{eq:tikhonov_minimizer} and evaluates the resulting function at the
nodes $\mathcal X$. The next lemma is a generalization of \cite[equation
2.2]{GoHeWa79} and \cite[equation 4.2.9]{Wa90}, and shows that for the
computation of \eqref{eq:cv}, with given diagonal entries
$h_{x,x}$ of the hat matrix $\bm H$, it is sufficient to solve the normal equation
\eqref{eq:tikhonov_minimizer} with respect to the data vector $\f$ only once.

\begin{theorem}
  \label{theorem:simp1}
  The ordinary cross-validation score \eqref{eq:cv} can be written as
  \begin{equation}\label{eq:cv_simple}
    P(\lambda)
    = \sum_{x \in \mathcal X} \frac {[\bm H\f-\f]_{x}^{2}}{(1-h_{x,x})^{2}}
\end{equation}
  with $h_{x,x}$, $x \in \mathcal X$ being the diagonal entries of the hat
  matrix $\bm H$ defined in \eqref{eq:hat}.
\end{theorem}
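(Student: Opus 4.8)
The plan is to reduce the theorem to the single pointwise identity
\begin{equation*}
  \left[\F\fht\m\right]_x - f_x = \frac{[\bm H\f-\f]_x}{1-h_{x,x}}, \qquad x \in \mathcal X,
\end{equation*}
after which squaring and summing over $x\in\mathcal X$ turns the definition \eqref{eq:cv} directly into \eqref{eq:cv_simple}. So the whole argument hinges on computing $[\F\fht\m]_x$ in closed form.

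First I would record two elementary decompositions that split off the contribution of the removed node $x$. Because $\bm W$ is diagonal, restricting to $\mathcal X\setminus\{x\}$ and deleting the $x$-th row give
\begin{equation*}
  \F\herm\bm W\f = \F\m\herm\bm W\m\f\m + w_x\,\F_{x,:}\herm f_x,
  \qquad
  \bm A\m = \bm A - w_x\,\F_{x,:}\herm\F_{x,:},
\end{equation*}
the second being the rank-one perturbation already exploited in Lemma~\ref{lemma:Sherman-Morrison}. I would also note that, by the definition \eqref{eq:hat} of the hat matrix together with the diagonality of $\bm W$, the diagonal entry is the scalar $h_{x,x} = w_x\,\F_{x,:}\bm A\inv\F_{x,:}\herm$, which is exactly what appears in the denominator of the Sherman--Morrison formula \eqref{eq:Ainv}; in particular $1-h_{x,x}\neq 0$, since otherwise $\bm A\m$ would fail to be invertible, contradicting \eqref{eq:2}.

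Next I would substitute \eqref{eq:Ainv} and the decomposition of $\F\herm\bm W\f$ into $[\F\fht\m]_x = \F_{x,:}\bm A\m\inv\F\m\herm\bm W\m\f\m$ and expand the product into four terms. Using $\F_{x,:}\bm A\inv\F\herm\bm W\f = [\bm H\f]_x$ and $w_x\F_{x,:}\bm A\inv\F_{x,:}\herm = h_{x,x}$ repeatedly — the latter being a scalar that can be pulled out of the products — the four terms collapse to
\begin{equation*}
  \left[\F\fht\m\right]_x
  = [\bm H\f]_x - h_{x,x}f_x + \frac{h_{x,x}}{1-h_{x,x}}[\bm H\f]_x - \frac{h_{x,x}^2}{1-h_{x,x}}f_x
  = \frac{[\bm H\f]_x - h_{x,x}f_x}{1-h_{x,x}}.
\end{equation*}
Subtracting $f_x = \tfrac{(1-h_{x,x})f_x}{1-h_{x,x}}$ and simplifying the numerator then yields the claimed pointwise identity, and inserting it into \eqref{eq:cv} proves \eqref{eq:cv_simple}.

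I expect the only real obstacle to be bookkeeping: one has to keep careful track of which of the factors $\F_{x,:}\bm A\inv\F_{x,:}\herm$ and $\F_{x,:}\bm A\inv\F\herm\bm W\f$ are scalars, so that they may be moved freely through the matrix products when expanding the four terms, and one must make sure the rank-one splitting of $\F\herm\bm W\f$ is applied to the correct factor. Everything else is routine algebra once Lemma~\ref{lemma:Sherman-Morrison} is available.
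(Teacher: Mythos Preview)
Your proposal is correct and follows essentially the same route as the paper: both use Lemma~\ref{lemma:Sherman-Morrison} together with the rank-one splitting $\F\herm\bm W\f = \F\m\herm\bm W\m\f\m + w_x\F_{x,:}\herm f_x$ to reduce everything to the pointwise identity $[\F\fht\m]_x - f_x = [\bm H\f-\f]_x/(1-h_{x,x})$. The only cosmetic difference is that the paper first derives the vector identity $\fht\m = \fht + w_x\bm A\inv\F_{x,:}\herm\,[\bm H\f-\f]_x/(1-h_{x,x})$ and then left-multiplies by $\F_{x,:}$, whereas you work directly with the scalar $[\F\fht\m]_x$ and expand into four terms; the algebra is the same either way.
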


\begin{proof}
  Let $\bm b = \F\herm\bm W\f$. Then
\begin{equation*}
    \fht\m
    = \bm A_{(x)}\inv \F_{(x)}\herm \bm W_{(x)} \f_{(x)}
    = \bm A_{(x)}\inv \left(\bm b - \F_{x,:}\herm w_{x}f_{x}\right).
  \end{equation*}
  Next we apply Lemma~\ref{lemma:Sherman-Morrison} and observe that the
  denominator in \eqref{eq:Ainv} can be expressed in terms of the diagonal
  entries $h_{x,x}$ of the hat matrix $\bm H$:
  \begin{align*}
    \fht\m
    &= \bm A\m \inv \left(\bm b - \F_{x,:} \herm w_{x}f_{x}\right)\\
    &= \left(\bm A\inv + \frac{w_x\bm A\inv \F_{x,:}\herm \F_{x,:}
      \bm A\inv}{1-h_{x,x}}\right)\left(\bm b-\F_{x,:}\herm w_xf_x\right) \\
    &= \fht + \frac{w_x\bm A\inv\F_{x,:}\herm f_x(h_{x,x}-1)
      + w_x\bm A\inv \F_{x,:}\herm \F_{x,:} \fht
      - w_x\bm A\inv\F_{x,:}\herm h_{x,x}f_x}{1-h_{x,x}} \\
    &= \fht+w_x\bm A\inv\F_{x,:}\herm \frac{\F_{x,:}\fht-f_x}{1-h_{x,x}}\\
    &= \fht+w_x\bm A\inv\F_{x,:}\herm \frac{[\bm H\f-\f]_x}{1-h_{x,x}}.
  \end{align*}

  Multiplying with $\F$ from the left-hand side and subtracting $f_x$
  results in
  \begin{align*}
    \left[\F\fht\m\right]_x-f_x
    &=\F_{x,:}\fht\m - f_x\\
    &= \F_{x,:}\fht
      + w_x\F_{x,:} \bm A\inv\F_{x,:}\herm \frac{[\bm H\f-\f]_x}{1-h_{x,x}} - f_x\\
    &= \left[\F\fht\right]_x+h_{x,x}\frac{[\bm H\f-\f]_x}{1-h_{x,x}} - [\f]_x\\
    &= [\bm H\f]_x+\frac{[\bm H\f-\f]_x}{1-h_{x,x}}+[\f-\bm H\f]_x - [\f]_x\\
    &= \frac{[\bm H\f-\f]_x}{1-h_{x,x}}\end{align*}
  and hence each summand in \eqref{eq:cv} is equal to the corresponding
  summand in \eqref{eq:cv_simple}.
\end{proof}

\begin{remark}
  \label{remark:cv_canonical_calculation}
  According to Theorem~\ref{theorem:simp1} the ordinary cross-validation score
  is nothing more than the weighted norm of the residue
  \begin{equation*}
    \bm r = \F\fht-\f = \bm H\f-\f.
  \end{equation*}
  Although this means that the normal equation \eqref{eq:tikhonov_minimizer}
  has to be solved only once with respect to the data vector $\f$ the most
  expensive part remains, namely the computation of the diagonal entries
  \begin{equation*}
    h_{x,x} = w_{x} \F_{x,:} \bm A\inv \F_{x,:}\herm
  \end{equation*}
  for $x \in \mathcal X$, which again requires repetitive solving of the normal equation.
\end{remark}

Replacing the diagonal entries $h_{x,x}$ with their mean value
\begin{equation*}
  h
  = \frac{1}{|\mathcal X|} \sum_{x \in \mathcal X} h_{x,x}
  = \frac{1}{|\mathcal X|} \trace \bm H
\end{equation*}
we obtain the so called generalized cross-validation score,
cf.\,\cite[section 4.3]{Wa90}.

\begin{definition}
  \label{def:generalizedCV}
  The \emph{generalized cross-validation score} is defined as
  \begin{equation*}\label{eq:gcv}
    V(\lambda)
    = \sum_{x \in \mathcal X} \frac {[\bm H\f-\f]_{x}^{2}}{(1-h)^{2}}
    = \left(\frac{|\mathcal X| \left\| \bm H\f-\f\right\|_2} {\trace(\bm I-\bm H)} \right)^{2}.
  \end{equation*}
\end{definition}

Obviously, if all diagonal entries $h_{x,x}$ of $\bm H$ coincide we have
$P(\lambda) = V(\lambda)$.

\begin{lemma}
  \label{lemma:h<1}
  The diagonal elements $h_{x,x}$ of the hat matrix $\bm H$ satisfy
  \begin{equation*}
   h_{x,x}<1
  \end{equation*}
  for all $\lambda > 0$ and $x \in \mathcal X$.
\end{lemma}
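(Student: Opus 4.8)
The plan is to derive a closed-form expression for $h_{x,x}$ that manifestly lies below $1$. Starting from the definition in Remark~\ref{remark:cv_canonical_calculation}, we have $h_{x,x} = w_x \F_{x,:} \bm A\inv \F_{x,:}\herm$ with $\bm A = \F\herm \bm W\F + \lambda\bm{\hat W}$. The key observation is the denominator in the Sherman--Morrison identity \eqref{eq:Ainv}: that lemma already tells us (as used in the proof of Theorem~\ref{theorem:simp1}) that $1 - h_{x,x} = 1 - w_x \F_{x,:} \bm A\inv \F_{x,:}\herm$ is exactly the quantity appearing as $1 - w_x \F_{x,:} \bm A\inv \F_{x,:}\herm$ in that formula. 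So the real content is to show this denominator is strictly positive, equivalently $w_x \F_{x,:} \bm A\inv \F_{x,:}\herm < 1$.

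First I would note that $w_x > 0$ since $\bm W$ is strictly positive diagonal, and that $\bm A\m = \bm A - w_x \F_{x,:}\herm \F_{x,:}$ is itself positive definite: indeed $\bm A\m = \F\m\herm \bm W\m \F\m + \lambda \bm{\hat W}$ by \eqref{eq:2}, and this is positive definite for $\lambda > 0$ by exactly the argument in the proof of the first lemma (the first summand is positive semidefinite, $\lambda \bm{\hat W}$ is positive definite). Now I would invoke the standard fact that if $\bm A$ and $\bm A - \bm u\bm u\herm$ are both positive definite (here $\bm u = \sqrt{w_x}\,\F_{x,:}\herm$), then $\bm u\herm \bm A\inv \bm u < 1$. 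Concretely, write $\bm A - \bm u\bm u\herm \succ 0$, conjugate by $\bm A^{-1/2}$ to get $\bm I - \bm A^{-1/2}\bm u\bm u\herm \bm A^{-1/2} \succ 0$; this matrix is $\bm I$ minus a rank-one positive semidefinite matrix whose only nonzero eigenvalue equals $\bm u\herm \bm A\inv \bm u$, so positive definiteness forces $\bm u\herm \bm A\inv \bm u < 1$, which is precisely $h_{x,x} < 1$.

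Alternatively, and perhaps more cleanly for the write-up, I would argue directly via Sherman--Morrison: applying \eqref{eq:Ainv} and sandwiching with $\sqrt{w_x}\,\F_{x,:}$ on both sides gives
\begin{equation*}
  w_x \F_{x,:} \bm A\m\inv \F_{x,:}\herm
  = h_{x,x} + \frac{h_{x,x}^2}{1 - h_{x,x}}
  = \frac{h_{x,x}}{1 - h_{x,x}}.
\end{equation*}
The left-hand side is nonnegative because $\bm A\m\inv$ is positive definite and $w_x > 0$. If we knew $h_{x,x}$ were also nonnegative (which follows the same way from $\bm A\inv \succ 0$), then $\tfrac{h_{x,x}}{1-h_{x,x}} \geq 0$ together with $h_{x,x} \geq 0$ forces $1 - h_{x,x} > 0$, i.e. $h_{x,x} < 1$; the case $h_{x,x} = 1$ is excluded since it would make the right-hand side undefined while the left-hand side is finite.

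The main obstacle, such as it is, is purely bookkeeping: one must be careful that $\bm A\m$ is genuinely positive definite (not merely invertible) so that $\bm A\m\inv$ is positive definite and the sandwiched quantity is nonnegative — this is where the hypothesis $\lambda > 0$ is used, via $\lambda\bm{\hat W} \succ 0$. Once that is in hand the inequality is immediate from either the rank-one perturbation argument or the Sherman--Morrison rearrangement above.
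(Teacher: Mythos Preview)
Your argument is correct, and in fact more direct than the paper's. The paper proceeds in two steps: first it observes that $\bm A\m$ is positive definite and hence invertible, and then invokes the Sherman--Morrison invertibility criterion to conclude only that $h_{x,x} \neq 1$; to upgrade this to the strict inequality $h_{x,x}<1$ it appeals to a continuity argument, noting that $h_{x,x}$ depends continuously on $\lambda$ and tends to $0$ as $\lambda\to\infty$, so it can never cross the value $1$.

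Your route is purely algebraic and avoids the limiting argument entirely: you use that both $\bm A$ and $\bm A\m = \bm A - \bm u\bm u\herm$ are positive definite, conjugate by $\bm A^{-1/2}$, and read off $\bm u\herm\bm A\inv\bm u<1$ from the spectrum of the rank-one perturbation of the identity. This is shorter and gives the strict inequality for each fixed $\lambda$ without needing to vary it. The paper's approach, on the other hand, yields as a by-product the asymptotic behaviour $h_{x,x}\to 0$ as $\lambda\to\infty$, which may be of independent interest but is not needed for the lemma itself. Your second variant via the sandwiched Sherman--Morrison identity is also fine, though note it implicitly relies on first knowing $h_{x,x}\neq 1$ (so that the identity is valid), which is exactly the paper's first step; your first variant is therefore the cleaner of the two.
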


\begin{proof}
  Since $\F_{(x)}\herm \bm W_{(x)} \F_{(x)}$ is positive semidefinite and
  $\lambda \bm{\hat W}$ is strictly positive definite we see that
  $\bm A\m=\F\m\herm \bm W\m\F\m+\lambda\bm{\hat W}$ is invertible. On the
  other hand we know by the Sherman-Morrison formula that
  $\bm A\m = \bm A - w_x\F_{x,:}\herm\F_{x,:}$ is invertible if and only if
  $w_x\F_{x,:}A\inv\F_{x,:}\herm \neq 1$ .  Therefore
  \begin{equation*}
    h_{x,x} = w_x\F_{x,:}\bm A\inv\F_{x,:}\herm \neq 1.
  \end{equation*}

  Since the minimizer $\ftilde_{\lambda}$ of \eqref{eq:tikhonov} converges to the
  zero vector as $\lambda\to\infty$, we obtain for $\f=\bm e_x$ and $\lambda\to\infty$
  \begin{equation*}
    h_{x,x} = \left[\F\ftilde_{\lambda}\right]_x \to 0.
  \end{equation*}
  Together with the fact that the diagonal entries $h_{x,x}$ depend
  continuously on $\lambda$ this proves the assertion.
\end{proof}

\subsection{Algorithm to compute the ordinary or generalized cross-validation score}

Concluding the previous statements we end up with a sheme to compute the cross-validation scores.

\begin{figure}[H]
\centering
\begin{minipage}{0.9\textwidth}
\begin{algorithm}[H]
  \label{algo:cv}
  \caption{generic computation of the cross-validation scores}

  \textbf{Input:}

  \quad nodes $\mathcal X$

  \quad spatial weights $\bm W = \diag(w_x)_{x\in\mathcal X}\in \mathbb R^{|\mathcal X| \times |\mathcal X|}  $

  \quad Fourier weights $\bm{\hat W}\in
  \mathbb R^{|\mathcal I| \times |\mathcal I|} $

  \quad function values $\f=(f_x)_{x\in\mathcal X}$

  \quad regularization parameter $\lambda$

  \medskip

  \textbf{Output:}

  \quad ordinary cross-validation score $P(\lambda)$

  \quad generalized cross-validation score $V(\lambda)$

  \medskip

  \textbf{1.} Compute $\ftilde \coloneqq \bm H \f = \F \bm A\inv \F\herm \bm W \f$,  where $\bm A$ is given in \eqref{eq:2}. \\
  \textbf{2.} Compute $ h_{x,x} \coloneqq w_{x} \F_{x,:} \bm A\inv \F_{x,:}\herm$ for $x \in \mathcal X$ and $\displaystyle h \coloneqq \frac 1{|\mathcal X|} \sum_{x\in\mathcal X} h_{x,x}$.\\
  \textbf{3.} Evaluate
  $\displaystyle P(\lambda) \coloneqq \sum_{x\in\mathcal X}\frac{|\tilde
    f_{x}-f_{x}|^{2}}{1-h_{x,x}}$ and
  $\displaystyle V(\lambda) \coloneqq \sum_{x\in\mathcal X}\frac{|\tilde
    f_{x}-f_{x}|^{2}}{1-h}$.
\end{algorithm}
\end{minipage}
\end{figure}

\begin{remark}
For computing the Tikhonov-minimizer of \eqref{eq:tikhonov} one can use the LSQR method for numerical stability.
This can be accomplished with the coefficient matrix
$$
  \bm M = \begin{pmatrix} \bm W^{1/2}\F \\ \sqrt\lambda\bm{\hat W}^{1/2} \end{pmatrix}
$$
and the right-hand side
$$
  \bm b = \begin{pmatrix}\bm W^{1/2} \f \\ \bm 0 \end{pmatrix},
$$
where $\bm 0$ is a column vector containing $|\mathcal I|$ zeros.
The resulting system of equations $(\bm M\herm \bm M)\inv\bm M\herm\bm b$, which the LSQR method solves, is equivalent to \eqref{eq:tikhonov_minimizer}.
\end{remark}

The computationally most expensive part of Algorithm~\ref{algo:cv} is the
computation of the values $\ftilde$ and $h_{x,x}$ for all $x \in \mathcal X$.
In the subsequent sections we discuss some specific settings to speed up the
process and propose an approximation of the ordinary and the generalized
cross-validation score in more general cases.

\section{Cross-validation on the torus}
\label{sec:t}

In this section, we seek for fast algorithms to compute the cross-validation
score on the $d$-dimensional torus $\T$ with respect to the Fourier basis
$\{\e^{2\pi\mathrm i\bm n\cdot\bm x}\}_{\bm n\in\mathbb Z^d}$ in $L_2(\T)$.
With this setting the Fourier matrix $\F$ becomes
\begin{equation}\label{eq:FM}
 \F = \F_{\mathcal X,\mathcal I}
  = \left(\e^{2\pi\mathrm i\bm n\cdot \bm x}\right)_{\bm x\in \mathcal X,\bm n\in \mathcal I}
\end{equation}
for a finite node set $\mathcal X \subset\T$, a finite multi-index set
$\mathcal I \subset \mathbb Z^d$ and $\bm n\cdot\bm x$ the Euclidean inner
product.  So $\mathcal I$ determines all possible frequencies and $\mathcal X$
the nodes of the transform. For the specific case of equispaced nodes
$\mathcal X$ fast algorithms have been reported in \cite{TaWe96}. In fact, our approach in this section can be seen as a
generalization of \cite{TaWe96} to more general sampling sets and leads
to the same algorithm for equispaced data.

Our central goal is to find a simpler expression for the diagonal entries
of the hat matrix
$\bm H = \F(\F\herm \bm W\F+\lambda\bm{\hat W})\inv\F\herm \bm W$ that allows
us to apply Theorem~\ref{theorem:simp1} efficiently. The idea is to
choose $\bm W$ such that $\F\herm \bm W\F$ has diagonal form because the
inverse of $\bm A = \F\herm\bm W\F+\lambda\bm{\hat W}$ could then be
calculated entry-wise.

\subsection{Exact Quadrature}
\label{sec:exact-quadrature-torus}

The first approach is to use quadrature rules. Because they are not limited to
the torus we define them for general measure spaces so we can make use of them
in subsequent sections.

\begin{definition}
  \label{def:quadrature}
  Let $(\mathcal M,\mu)$ be a measure space and
  $\mathcal P \subset L_{1}(\mathcal M)$ a set of integrable functions. We
  call a set of nodes $\mathcal X\subset \mathcal M$ and a list of weights
  $\bm W = \diag(w_{\bm x})_{\bm x\in\mathcal X}$ an \emph{exact quadrature rule for
    $\mathcal P$}, if for all $f\in\mathcal P$ we have
  \begin{equation*}
    Q_{\mathcal X,\bm W}f \coloneqq \sum_{\bm x\in\mathcal X} w_{\bm x} f(\bm x)
    = \int_{\mathcal M} f(\bm x)\;\mathrm d\mu(\bm x).
  \end{equation*}
\end{definition}

For the tours we obtain the following

\begin{theorem}
  \label{theorem:simplification_torus}
  Let $\mathcal I\subset \mathbb Z^d$ be a finite multi-index set with Fourier
  weights $\bm{\hat W}=\diag(\hat w_{\bm n})_{\bm n\in \mathcal I}$,
  $\mathcal X\subset \T$ a set of nodes with $\bm W$ their corresponding
  quadrature weights such that $(\mathcal X,\bm W)$ forms a quadrature rule
  which is exact for all trigonometric polynomials $\e^{2\pi \mathrm i \bm n \cdot}$
  with frequencies $\bm n$ in the difference set
  $\mathcal D(\mathcal I)\coloneqq\{\bm n_1-\bm n_2:\bm n_1,\bm n_2\in
  \mathcal I\}$. Then
  \begin{compactenum}[(i)]
  \item
  the inverse of $\bm A$, given in \eqref{eq:2}, where $\F$ is the Fourier matrix \eqref{eq:FM} on $\T$ is
  \begin{equation*}
    \bm A\inv
    = \left(\F\herm\bm W\F+\lambda\bm{\hat W}\right)\inv
    = \diag\left(\frac 1{1+\lambda\hat w_{\bm n}}\right)_{\bm n\in\mathcal I},
  \end{equation*}
  \item
  the diagonal entry corresponding to the node $\bm x\in\mathcal X$ of the hat matrix $\bm H = \F\bm A\inv\F\herm \bm W$ becomes
  \begin{equation}
    \label{eq:diagonals_simplification}
    h_{\bm x,\bm x}
    = w_{\bm x}\sum_{\bm n\in \mathcal I} \frac 1{1+\lambda \hat w_{\bm n}}.
  \end{equation}
  \end{compactenum}
\end{theorem}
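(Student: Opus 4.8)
The plan is to show that the exactness of the quadrature rule on the difference set $\mathcal D(\mathcal I)$ forces $\F\herm\bm W\F$ to be the identity matrix, and then to read off both claims from the resulting diagonal structure. For part~(i) I would compute the $(\bm n_1,\bm n_2)$ entry of $\F\herm\bm W\F$ directly: since $\bm W=\diag(w_{\bm x})$ is diagonal, this entry equals
\begin{equation*}
  \sum_{\bm x\in\mathcal X} w_{\bm x}\,\overline{\e^{2\pi\mathrm i\bm n_1\cdot\bm x}}\,\e^{2\pi\mathrm i\bm n_2\cdot\bm x}
  = \sum_{\bm x\in\mathcal X} w_{\bm x}\,\e^{2\pi\mathrm i(\bm n_2-\bm n_1)\cdot\bm x}
  = Q_{\mathcal X,\bm W}\bigl(\e^{2\pi\mathrm i(\bm n_2-\bm n_1)\cdot}\bigr).
\end{equation*}
Because $\bm n_2-\bm n_1\in\mathcal D(\mathcal I)$, the assumed exactness replaces this quadrature by $\int_{\T}\e^{2\pi\mathrm i(\bm n_2-\bm n_1)\cdot\bm x}\,\mathrm d\bm x$, which is $1$ for $\bm n_1=\bm n_2$ and $0$ otherwise. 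Hence $\F\herm\bm W\F=\bm I$ and $\bm A=\bm I+\lambda\bm{\hat W}=\diag(1+\lambda\hat w_{\bm n})_{\bm n\in\mathcal I}$; since $\lambda>0$ and $\bm{\hat W}$ is strictly positive, every diagonal entry is nonzero and inverting entry-wise yields the stated formula for $\bm A\inv$.

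For part~(ii) I would insert this diagonal form of $\bm A\inv$ into the expression $h_{\bm x,\bm x}=w_{\bm x}\F_{\bm x,:}\bm A\inv\F_{\bm x,:}\herm$ from Remark~\ref{remark:cv_canonical_calculation}. With $\F_{\bm x,:}=(\e^{2\pi\mathrm i\bm n\cdot\bm x})_{\bm n\in\mathcal I}$, the sandwiched product collapses to $\sum_{\bm n\in\mathcal I}\e^{2\pi\mathrm i\bm n\cdot\bm x}\,(1+\lambda\hat w_{\bm n})\inv\,\overline{\e^{2\pi\mathrm i\bm n\cdot\bm x}}=\sum_{\bm n\in\mathcal I}(1+\lambda\hat w_{\bm n})\inv$, since $|\e^{2\pi\mathrm i\bm n\cdot\bm x}|^2=1$ for every $\bm n$ and $\bm x$. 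Multiplying by $w_{\bm x}$ gives \eqref{eq:diagonals_simplification}.

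The only substantive observation is that the difference set $\mathcal D(\mathcal I)$ is exactly the set of frequencies appearing in the products $\overline{\varphi_{\bm n_1}}\varphi_{\bm n_2}$, so the hypothesis ``exact on $\mathcal D(\mathcal I)$'' is precisely what orthonormalizes the columns of $\F$ with respect to the $\bm W$-weighted inner product; everything else is a one-line computation. I do not expect any real obstacle here — one only has to keep in mind the normalization $\int_{\T}1\,\mathrm d\bm x=1$ used for the torus, and that $\hat w_{\bm n}>0$ together with $\lambda>0$ is what guarantees invertibility in~(i).
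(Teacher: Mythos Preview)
Your proposal is correct and follows essentially the same route as the paper's proof: use exactness on $\mathcal D(\mathcal I)$ to identify $\F\herm\bm W\F$ with the identity, read off the diagonal form of $\bm A$ and its inverse, and then collapse $|\e^{2\pi\mathrm i\bm n\cdot\bm x}|^2=1$ in the expression for $h_{\bm x,\bm x}$. If anything, you are slightly more explicit than the paper about the invertibility step and the role of the normalization $\int_{\T}1\,\mathrm d\bm x=1$.
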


\begin{proof}
  Since the product of two exponential functions supported on the frequency
  set $\mathcal I$ has only frequencies in $\mathcal D(\mathcal I)$, where the
  quadrature nodes and weights are exact, we have
  \begin{equation*}
    \left[\F\herm\bm W\F\right]_{\bm n_1,\bm n_2}
    = \sum_{\bm x\in\mathcal X} w_{\bm x} \overline{\e^{2\pi\mathrm i\bm n_1\bm{x}}}\e^{2\pi\mathrm i\bm n_2\bm{x}}
    = \int_\T \overline{\e^{2\pi\mathrm i\bm n_1\bm x}}\e^{2\pi\mathrm i\bm n_2\bm x} \;\mathrm d\bm x.
  \end{equation*}
  and, hence,
  \begin{equation*}
    \F\herm\bm W\F+\lambda\bm{\hat W}
    = \diag\left(1+\lambda\hat w_{\bm n}\right)_{\bm n\in\mathcal I}.
  \end{equation*}
  This implies (i). For (ii) we compute the diagonal entries of $\bm H$ as
  \begin{equation*}
    \label{eq:t_diagonals}
    h_{\bm x,\bm x} = w_{\bm x}\sum_{\bm n\in \mathcal I} \frac 1{1+\lambda \hat w_{\bm n}} \e^{2\pi\mathrm i\bm n\bm x}\e^{-2\pi\mathrm i\bm n\bm x}
    = w_{\bm x}\sum_{\bm n\in \mathcal I} \frac 1{1+\lambda \hat w_{\bm n}}.
  \end{equation*}
\end{proof}

\begin{corollary}
\label{corollary:t_fast_exact}
With the prerequisites of Theorem~\ref{theorem:simplification_torus} we can compute $P(\lambda)$ and $V(\lambda)$ by Algorithm~\ref{algo:cv} in the same complexity as multiplying a vector with $\F$ or $\F\herm$ for a fixed regularization parameter $\lambda$.
\end{corollary}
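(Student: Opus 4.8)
The plan is to walk through the three steps of Algorithm~\ref{algo:cv} and bound the cost of each using the simplifications provided by Theorem~\ref{theorem:simplification_torus}, together with the availability of a fast Fourier transform that realizes multiplication by $\F$ or $\F\herm$ in $\mathcal O(|\mathcal I|\log|\mathcal I|+|\mathcal X|)$ operations (or the exact analogue in the rank-1 lattice case). Denote by $T_\F$ the cost of one such matrix-vector product; the claim is then that the whole algorithm runs in $\mathcal O(T_\F)$ for each fixed $\lambda$.

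First I would treat Step~1, the computation of $\ftilde = \bm H\f = \F\bm A\inv\F\herm\bm W\f$. By part (i) of Theorem~\ref{theorem:simplification_torus}, $\bm A\inv = \diag(1/(1+\lambda\hat w_{\bm n}))_{\bm n\in\mathcal I}$, so this is a product of: a diagonal scaling by $\bm W$ (cost $\mathcal O(|\mathcal X|)$), a multiplication by $\F\herm$ (cost $T_\F$), a diagonal scaling by $\bm A\inv$ (cost $\mathcal O(|\mathcal I|)$), and a multiplication by $\F$ (cost $T_\F$). Hence Step~1 costs $\mathcal O(T_\F)$. Next, Step~2 is where the key simplification enters: by \eqref{eq:diagonals_simplification} the diagonal entry is $h_{\bm x,\bm x} = w_{\bm x}\sum_{\bm n\in\mathcal I}1/(1+\lambda\hat w_{\bm n})$, so the sum $s \coloneqq \sum_{\bm n\in\mathcal I}1/(1+\lambda\hat w_{\bm n})$ is computed once in $\mathcal O(|\mathcal I|)$ operations, and then each $h_{\bm x,\bm x} = w_{\bm x}s$ costs $\mathcal O(1)$, giving $\mathcal O(|\mathcal I|+|\mathcal X|)$ total; the mean $h$ follows in another $\mathcal O(|\mathcal X|)$. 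Finally, Step~3 assembles the two scores from the already-computed quantities $\ftilde$, $\f$, $h_{\bm x,\bm x}$, and $h$ by a single pass over $\mathcal X$, costing $\mathcal O(|\mathcal X|)$. Summing the three steps gives the stated complexity $\mathcal O(T_\F)$, i.e., the same order as one multiplication by $\F$ or $\F\herm$.

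The only genuine obstacle is verifying that $T_\F$ indeed dominates the linear terms $\mathcal O(|\mathcal I|+|\mathcal X|)$ and that no hidden dependence on $\lambda$ spoils the bound. For the FFT-based algorithms on equispaced grids or rank-1 lattices referenced in the introduction this is standard, since $T_\F \geq c(|\mathcal I|+|\mathcal X|)$ by construction; and since $\lambda$ enters only through the scalar-valued diagonal entries of $\bm A\inv$ and the single sum $s$, recomputing the scores for a new $\lambda$ costs $\mathcal O(T_\F)$ again. I would therefore phrase the proof as: invoke Theorem~\ref{theorem:simplification_torus}(i)--(ii) to eliminate the inversion of $\bm A$ and the per-node solves, observe that Steps~1--3 then reduce to a constant number of fast transforms plus $\mathcal O(|\mathcal I|+|\mathcal X|)$ elementwise work, and conclude. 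No step requires more than the results already established in the excerpt, so the argument is essentially a bookkeeping of operation counts.
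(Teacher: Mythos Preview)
Your proposal is correct and follows exactly the approach implicit in the paper; in fact, the paper does not supply a separate proof for this corollary at all, treating it as immediate from Theorem~\ref{theorem:simplification_torus}. The analogous Corollaries~\ref{corollary:i_fast_exact} and~\ref{corollary:s2_fast} are proved in precisely the way you outline---reduce $\bm H\f$ to two fast transforms plus diagonal scalings, and use the explicit formula for $h_{\bm x,\bm x}$---so your step-by-step bookkeeping is, if anything, more explicit than what the authors provide.
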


\begin{remark}\label{remark:problemalteringweights}
  Theorem~\ref{theorem:simplification_torus} requires $(\mathcal X, \bm W)$ to form an exact quadrature rule which directly alters the weighting of the data-fitting term in the underlying Tikhonov functional~\eqref{eq:tikhonov}.
  Preferebly, one wants to have the weight $w_x$ proportional to the level of noise $\epsilon_x$ for every node $x\in\mathcal X$, cf.\,\cite{Re67}.
  In most of the following examples the weights as well as the Gaussian noise are uniform, thus fulfulling this property.
\end{remark}

Case studies for specific exact quadrature rules on the torus are discussed in the following two subsections.

\subsection{Equispaced Nodes}
\label{sec:torus_equispaced}

The simplest example of quadrature on the torus $\mathbb T^{d}$ is Gauss
quadrature which consists of $N^d$ equispaced nodes
\begin{equation*}
  \mathcal X = \left\{\frac{\bm m}N\in\T:\bm m\in\mathbb Z^d\cap\prod_{t=1}^d[0,N)\right\}
\end{equation*}
with uniform weights $w_{\bm x} = N^{-d}$.  The resulting quadrature formula
is exact for all trigonometric polynomials supported on
$\mathcal I_{2N} \coloneqq \mathbb Z^d\cap\prod_{t=1}^d[-N,N) = \mathcal
D(\mathcal I_N)$.  Thus we can apply Theorem~\ref{theorem:simplification_torus} for $\mathcal X$ and
$\mathcal I = \mathcal I_N$.  The corresponding Fourier matrix
$\F=\F_{\mathcal X,\mathcal I_N}$ describes the ordinary discrete Fourier
transform for which the matrix-vector product can be computed in
$\mathcal O(N^d\log N)$ with the fast Fourier transform.

For $d=1,2$ our algorithm coincides with the algorithm proposed in
\cite{TaWe96} with the only difference that the authors evaluated the data
fitting term in the frequency domain and used specific Fourier weights
$w_{n} = n^{4}$ as regularization term.

In order to illustrate our approach we chose as the test function $f$ the
peaks function from \textsc{Matlab}, which is a sum of translated and scaled
Gaussian bells. We evaluated this function on a grid of $1024\times 1024$
equispaced nodes $\mathcal X$ and corrupted the data by 10\% Gaussian noise
$\epsilon_{\bm x}$, i.e., we set
\begin{equation*}
  f_{\bm x} = f(\bm x)+\epsilon_{\bm x}
\end{equation*}
for all $\bm x\in\mathcal X$ as depicted in Figure~\ref{fig:equispaced_2d}, (a).

\begin{figure}
  \centering

  \begin{subfigure}[b]{0.25\textwidth}
    \includegraphics[width=\textwidth]{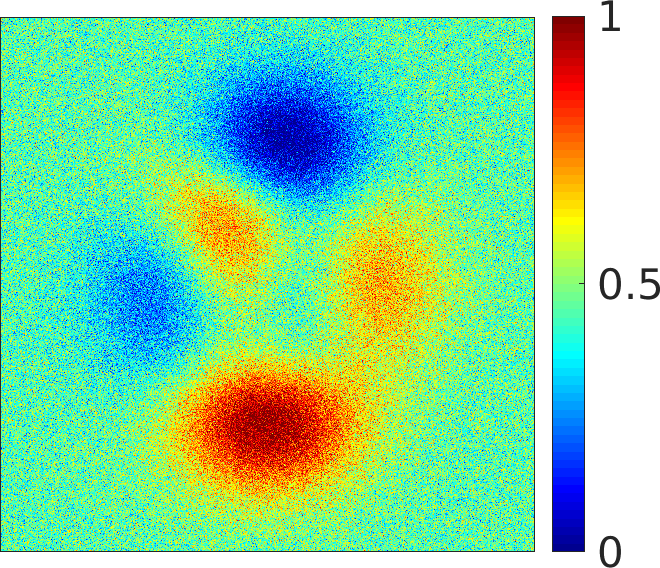}
    \\ \\
    \includegraphics[width=\linewidth]{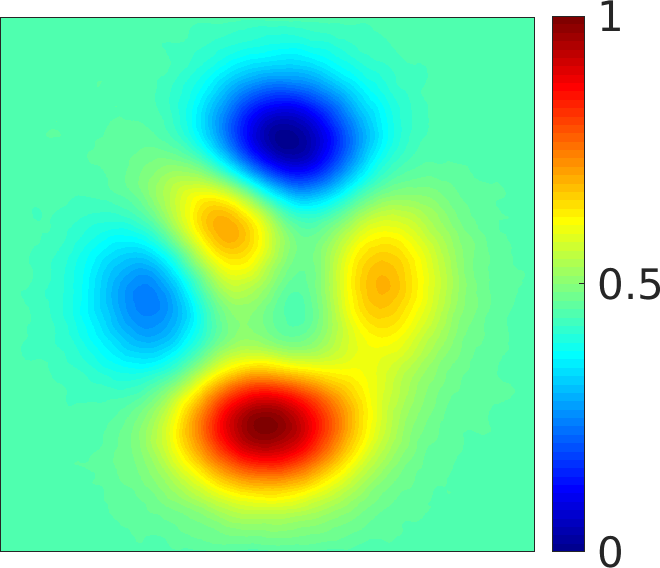}
    \caption{noisy input data and reconstruction}
  \end{subfigure}
  \quad
  \begin{subfigure}[b]{0.5\textwidth}
    \begin{tikzpicture}[font = \footnotesize]
      \begin{axis}[
        scale only axis,
        width             = 0.8\textwidth, height = 4cm,
        xlabel            = $\lambda$,
        xmode             = log,
        ymode             = log,
        axis y line*      = left,
        enlarge x limits  = {abs = 0},
        legend pos        = north east,
        legend cell align = left,
      ]
        \addplot[no marks,thick] table[x index = 0, y index = 1] {data/t_equispaced.dat};
        \addlegendentry{$\lVert \tilde \f_{\lambda} - \hat \f \rVert_{2}$};
        \addlegendimage{solid,orange,thick};
        \addlegendentry{$P(\lambda)$};
        \addplot [mark=*] coordinates {(4.2507352e-04,2.4250072e-03)};

      \end{axis}
      \begin{axis}[
        scale only axis,
        width             = 0.8\textwidth, height = 4cm,
        xmode             = log,
        ymode             = log,
        axis y line*      = right,
        axis x line       = none,
        axis line style   = {orange},
        yticklabel style  = {color = orange},
        enlarge x limits  = {abs = 0},
        ytick             = {1.05,1.055},
        yticklabels       = {$10^{4.021}$,$10^{4.023}$},
      ]
        \addplot[no marks,orange,thick] table[x index = 0, y index = 2] {data/t_equispaced.dat};
        \addplot [mark=*,color=orange] coordinates {(4.6799330e-04,1.0485906)};
      \end{axis}
    \end{tikzpicture}
    \caption{approximation error
      $\lVert \tilde \f_{\lambda} - \hat \f \rVert_{2}$ (black) and cross-validation
      scores (orange)}
  \end{subfigure}

  \caption{Approximation from two-dimensional equispaced data: Comparison
    of the ordinary cross-validation score $P(\lambda)$ and the approximation error.}
  \label{fig:equispaced_2d}
\end{figure}

As regularization term we fixed isotropic Sobolev weights
$\hat w_{\bm n}=1+\|\bm n\|_2^s$ for $\bm n\in\mathcal I_N$ and $s=3$ in Fourier space,
which correspond to a function with $3$ derivatives in $L_2(\T)$.
Varying the regularization parameter $\lambda \in [2^{-18},2^{-8}]$ we
computed the Tikhonov minimizers $\ftilde_{\lambda}$ according to
\eqref{eq:tikhonov_minimizer}. We then applied Parseval to the original
$\fhat$ and $\ftilde_\lambda$ which is a byproduct from
Algorithm~\ref{algo:cv} to compute the $L_2(\T)$-approximation errors
as a function of the regularization parameter $\lambda$. The resulting curve
is depicted in Figure~\ref{fig:equispaced_2d}, (b).  Note that according to
\eqref{eq:diagonals_simplification} all diagonal entries of the hat matrix are
equal and, hence, the ordinary cross-validation score coincides with the
generalized cross-validation score. The reconstruction $\ftilde_{\lambda}$
with respect to the minimizer of the cross-validation score $P(\lambda)$ is
depicted in Figure~\ref{fig:equispaced_2d}, (a).

In Figure~\ref{fig:equispaced_2d}, (b) the actual $L_2(\T)$-approximation
error is compared to the cross-validation score $P(\lambda)$ computed
according to Algorithm~\ref{algo:cv} with use of the fast Fourier transform.
We observe that the minimizers of both functionals coincide surprisingly well.
For this numerical experiment the average running time for the evaluation of
$P(\lambda)$ for a single value of $\lambda$ was $0.06$ seconds with the fast
algorithm and more than $14$ hours for a direct implementation of
\eqref{eq:cv}.

\subsection{Rank-1 Lattices}
\label{sec:torus_r1l}

The approximation of high-dimensional multivariate periodic functions by
trigonometric polynomials using particular finite index sets $\mathcal I$ in
frequency domain is possible using special index sets \cite{Tem93, DuTeUl2018}
on the domain $\mathcal X$. The most efficient method uses samples along
rank-1 lattices and is based on a simple univariate FFT
\cite{KaPoVo13}. Rank-1 lattices are defined by
\begin{equation*}
  \mathcal X = \Lambda(\bm z,M)
  \coloneqq \left\{\bm x=\frac 1M(m\bm z\bmod M\bm 1)\in\T:m=0,\dots,M-1\right\}
\end{equation*}
where $M\bm 1=(M,\dots,M)\transp\in\mathbb Z^d$.  They are fully characterized
by the \emph{generating vector} $\bm z \in \mathbb Z^{d}$ and the \emph{lattice size} $M$.
There exist algorithms which, given a frequency index set $\mathcal I$ and
$M$, compute a generating vector $\bm z$ such that $\F\herm\bm W\F$ equals the
identity matrix for $\bm W = \diag(1/M)_{\bm x\in\mathcal X}$,
cf.~\cite{KaPoVo13, kaemmererdiss, PlPoStTa18}.  The advantage of rank-1
lattices is the variable index set $\mathcal I$ instead of the tensor-product
approach like in Section~\ref{sec:torus_equispaced}.  So depending on the
function we can adapt to different decay properties of the Fourier
coefficients.  Furthermore there exist fast algorithms which evaluate the
matrix-vector product with $\F$ or $\F\herm$ in
$\mathcal O(M\log M+d|\mathcal I|)$ using only one one-dimensional fast
Fourier transform.

To exemplify these ideas we looked at a sample function consisting of a tensor
product of $L_2(\T)$-normed B-splines of order two in seven dimensions,
i.e., $d=7$,
\begin{equation*}
  f(\bm x)
  = \prod_{j=1}^d B_2(x_j),\quad B_2(x) = 2\sqrt 3\left(\mathcal X_{[0,0.5)}x+\mathcal X_{[0.5,1)}(1-x)\right)
\end{equation*}
where $\mathcal X_A$ denotes the indicator function. The Fourier coefficients
of $f$ are
\begin{equation*}
  \hat f_{\bm n} = \prod_{j=1}^d \begin{cases}
    \sqrt{3/4} & : n_j = 0 \\
    \sqrt{3/4}\left(\frac{\sin(n_j\pi /2)}{n_j \pi /2}\right)^2\cos(n_j \pi) &
    :\text{otherwise.}
  \end{cases}
\end{equation*}
Therefore the Fourier coefficients decay like $\mathcal O(\prod_{j=1}^d n_j^{-2})$ and a candidate
for an index set $\mathcal I$ would be a $d$-dimensional hyperbolic cross
\begin{equation*}
  \mathcal I_N^{d,\mathrm{hc}} \coloneqq \left\{\bm n\in\mathbb Z^d : \prod_{j=1}^d \max(1,|n_j|)\le N\right\}.
\end{equation*}

In particular we used a radius of $N=16$, a reconstructing rank-1 lattice
$\mathcal X$ from \cite[table 6.2]{KaPoVo13} with $M=1\,105\,193$ nodes and set
the weights in Fourier space to
$\hat w_{\bm n} = \prod_{j=1}^d\max(|n_j|^2,1)$.

Applying Algorithm~\ref{algo:cv} to $f_{\bm x} = f(\bm x) + \varepsilon$,
$\bm x \in \mathcal X$, where $\varepsilon$ denotes 5\% Gaussian noise we
computed the cross-validation scores $P(\lambda) = V(\lambda)$ for
$\lambda\in[2^{-9},2^0]$. Again both scores coincide since the diagonal
entries \eqref{eq:diagonals_simplification} of the hat matrix are multiples of
the constant weights $w_{\bm x}$ in spatial domain. For the multiplications
with $\F$ and $\F\herm$ we made use of fast rank-1 lattices Fourier
transforms. A comparison of the actual $L_2(\T)$-approximation error with the
cross-validation score is plotted in Figure~\ref{img:r1l_6d}. We observe that
the optimal $\lambda$ with respect to the $L_2(\T)$-error and the $\lambda$
chosen by cross-validation are very close in this example.

\begin{figure}
  \centering

  \begin{tikzpicture}[font = \small]
    \begin{axis}[
      scale only axis,
      width             = 0.5\textwidth, height = 4cm,
      xlabel            = $\lambda$,
      xmode             = log,
      ymode             = log,
      axis y line*      = left,
      enlarge x limits  = {abs = 0},
      legend pos        = north east,
      legend cell align = left,
      ytick            = {0.17, 0.1514},
      ]
      \addplot[no marks,thick] table[x index = 0, y index = 1] {data/t_r1l.dat};
      \addlegendentry{$\lVert \tilde \f_{\lambda} - \hat \f \rVert_{2}$};
      \addlegendimage{solid,orange,thick};
      \addlegendentry{$P(\lambda)$};
      \addplot [mark=*] coordinates {( 2.7127847e-02,1.4738674e-01)};

    \end{axis}
    \begin{axis}[
      scale only axis,
      width             = 0.5\textwidth, height = 4cm,
      xmode             = log,
      axis y line*      = right,
      axis x line       = none,
      axis line style   = {orange},
      yticklabel style  = {color = orange},
      enlarge x limits  = {abs = 0},
      ytick            = {19.196,19.198}, yticklabels       = {$10^{8.3367}$,$10^{8.3372}$},
      ]
      \addplot[no marks,orange,thick] table[x index = 0, y index = 2] {data/t_r1l.dat};
      \addplot [mark=*,color=orange] coordinates {( 2.7127847e-02,1.9195005e+01)};
    \end{axis}
  \end{tikzpicture}

  \caption{Approximation in $\mathbb T^{7}$ from data at a rank-1 lattice: Comparison of the ordinary cross-validation score $P(\lambda)$ with the
    approximation error.}
  \label{img:r1l_6d}
\end{figure}
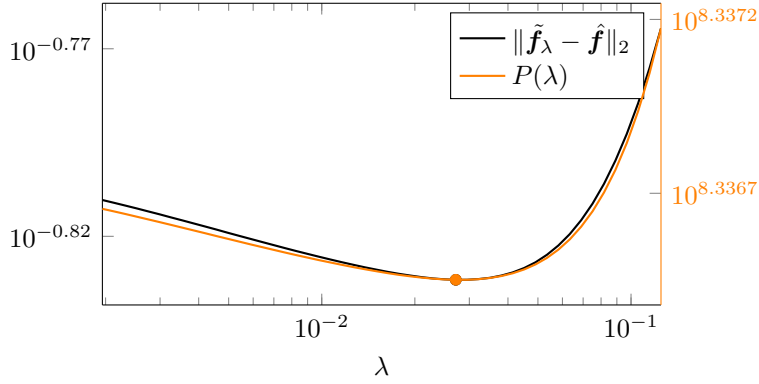

\subsection{Approximative quadrature}
\label{subs:torus_app}

In the case of scattered data approximation exact quadrature rules are
typically not available. In principle, one can compute exact quadrature rules
by determining the weighs $\bm W = \diag(w_{\bm x})_{\bm x\in\mathcal X}$ as a solution of
the linear system
\begin{equation*}\label{eq:quadrature_t}
  \bm F_{\mathcal X,\mathcal D(\mathcal I)}\herm \bm w
  = \left(\e^{2\pi\mathrm i\bm x\cdot\bm n}\right)_{\bm x\in\mathcal X,\bm n\in\mathcal D(\mathcal I)}\herm\bm w
  = \bm e_{\bm 0},
\end{equation*}
where $\bm e_{\bm 0}$ is the vector which only contains zeros, except in the
$\bm 0$-th position where it is equal to one. These weights can be
guaranteed to be non negative under certain conditions on the frequency index
set $\mathcal I$ and the mesh norm
\begin{equation*}
  \delta_{\mathcal X} \coloneqq \max_{\bm y\in\mathcal M} \min_{\bm x\in\mathcal X} |\bm y-\bm x|
\end{equation*}
of the nodes $\mathcal X$, cf.~\cite{FiMh11}. However, those conditions strongly
restrict the polynomial degree and do not guaranty the quadrature weights to
be non-oscillating. This may be problematic, since the weights directly alter
the problem \eqref{eq:tikhonov} we want to solve.

Our idea is to replace the exact quadrature weights $\bm W$ by approximative
weights derived from the Voronoi tessellation of the node set $\mathcal X$.

\begin{definition}\label{def:voronoi}
  Let $\mathcal M$ be a Riemannian manifold with a distance function
  $\dist(\cdot,\cdot)$.  For a set of nodes $\mathcal X\subset\mathcal M$ we define
  the \emph{Voronoi cell} $V_{\bm x}$ corresponding to $\bm x\in\mathcal X$ by
  \begin{equation*}
    V_{\bm x} \coloneqq \left\{\bm y\in\mathcal M :
      \dist(\bm x,\bm y) \le \dist(\bm{x^\prime},\bm y), \;\forall \bm{x^\prime}\in\mathcal X\right\}.
  \end{equation*}
  The \emph{Voronoi weight} $w_{\bm x}$ is the area of the Voronoi cell $V_{\bm x}$
  \begin{equation*}
    w_{\bm x} \coloneqq \int_{\mathcal M} \mathcal X_{V_{\bm x}}(\bm y) \;\mathrm d\bm y = \int_{V_{\bm x}}\mathrm d\bm y.
  \end{equation*}
\end{definition}

To emphasize the choice of the Voronoi weights as approximative quadrature
weights we make a rough error estimate for the approximated quadrature
using the Voronoi weights.

\begin{theorem}
  \label{theorem:voronoi_quadrature}
  Let $f\colon\mathcal M\to\C$ be Lipschitz continuous with the Lipschitz
  constant $L$.  Let further $\mathcal X$ be a set of nodes with mesh norm
  \begin{equation*}
    \delta_{\mathcal X} \coloneqq \max_{\bm y\in\mathcal M} \min_{\bm x\in\mathcal X} \dist(\bm y,\bm x)
  \end{equation*}
  and Voronoi weights $w_{\bm x}$. Then
  \begin{equation*}
    \left|\sum_{\bm x\in\mathcal X} w_{\bm x} f(\bm x)
      - \int_{\mathcal M}f(\bm y)\;\mathrm d\bm y\right|
    \le L\delta_{\mathcal X}\int_{\mathcal M}\mathrm d\bm y.
  \end{equation*}
\end{theorem}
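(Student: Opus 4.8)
The plan is to exploit that the Voronoi cells $\{V_{\bm x}\}_{\bm x\in\mathcal X}$ tile the manifold $\mathcal M$: their union is all of $\mathcal M$, since every point has a nearest node, and two distinct cells meet only on a common boundary of measure zero. Granting this, I would first split the integral over the cells, $\int_{\mathcal M} f(\bm y)\,\mathrm d\bm y = \sum_{\bm x\in\mathcal X}\int_{V_{\bm x}} f(\bm y)\,\mathrm d\bm y$, and rewrite the quadrature sum in the same form, using that $f(\bm x)$ is constant on $V_{\bm x}$ and $\int_{V_{\bm x}}\mathrm d\bm y = w_{\bm x}$, hence $\sum_{\bm x\in\mathcal X} w_{\bm x} f(\bm x) = \sum_{\bm x\in\mathcal X}\int_{V_{\bm x}} f(\bm x)\,\mathrm d\bm y$. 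Subtracting and applying the triangle inequality then gives
\begin{equation*}
  \left|\sum_{\bm x\in\mathcal X} w_{\bm x} f(\bm x) - \int_{\mathcal M} f(\bm y)\,\mathrm d\bm y\right|
  \le \sum_{\bm x\in\mathcal X}\int_{V_{\bm x}} |f(\bm x)-f(\bm y)|\,\mathrm d\bm y.
\end{equation*}

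Next I would use the defining property of the Voronoi cell: if $\bm y\in V_{\bm x}$, then $\dist(\bm x,\bm y)\le\dist(\bm x',\bm y)$ for every $\bm x'\in\mathcal X$, so $\bm x$ realizes $\min_{\bm x'\in\mathcal X}\dist(\bm x',\bm y)$, and this minimum is bounded by the mesh norm $\delta_{\mathcal X}$ by definition. Combined with Lipschitz continuity this yields the pointwise bound $|f(\bm x)-f(\bm y)|\le L\,\dist(\bm x,\bm y)\le L\delta_{\mathcal X}$ for all $\bm y\in V_{\bm x}$. Inserting this uniform estimate into every integrand and pulling the constant out gives $\sum_{\bm x\in\mathcal X}\int_{V_{\bm x}} L\delta_{\mathcal X}\,\mathrm d\bm y = L\delta_{\mathcal X}\sum_{\bm x\in\mathcal X} w_{\bm x} = L\delta_{\mathcal X}\int_{\mathcal M}\mathrm d\bm y$, which is exactly the claimed (deliberately crude) bound.

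The only genuinely delicate point is the measure-theoretic bookkeeping in the tiling step, namely that $\mathcal M=\bigcup_{\bm x\in\mathcal X}V_{\bm x}$ and that the pairwise intersections $V_{\bm x}\cap V_{\bm x'}$, being contained in the equidistant locus $\{\dist(\bm x,\cdot)=\dist(\bm x',\cdot)\}$, are null sets, so that the integral is additive over the cells. For a Riemannian manifold with finitely many nodes --- the relevant cases here being $\mathcal M=\T$ and $\mathcal M=\S$ --- this is standard, and everything else reduces to the triangle inequality and a one-line Lipschitz estimate, so I expect no real obstacle beyond that.
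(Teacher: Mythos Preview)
Your proposal is correct and follows essentially the same route as the paper: decompose the integral over the Voronoi cells, subtract the quadrature sum cell by cell, and bound $|f(\bm x)-f(\bm y)|$ on each $V_{\bm x}$ via Lipschitz continuity together with the observation that $\dist(\bm x,\bm y)\le\delta_{\mathcal X}$ for $\bm y\in V_{\bm x}$. If anything, you are slightly more explicit than the paper about why the Voronoi tiling is measure-disjoint and why the node $\bm x$ realizes the minimum distance for points in its own cell.
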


\begin{proof}
  Since the disjoint union of all Voronoi cells $V_{\bm x}$ is $\mathcal M$ itself we can
  decompose the integral as follows
  \begin{equation*}
    \int_{\mathcal M} f(\bm y)\;\mathrm d\bm y
    = \sum_{\bm x\in\mathcal X} \int_{V_{\bm x}} f(\bm y)\;\mathrm d\bm y
    = \sum_{\bm x\in\mathcal X} \left(w_{\bm x}f(\bm x)+\int_{V_{\bm x}} f(\bm y)-f(\bm x)\;\mathrm d\bm y\right).
  \end{equation*}
  Noting that the maximal distance of $\bm x$ to any other node of the
  corresponding Voronoi cell $V_{\bm x}$ cannot exceed $\delta_{\mathcal X}$, we use
  the Lipschitz continuity to estimate the leftover integrand
  \begin{equation*}
    \left|\sum_{\bm x\in\mathcal X} w_{\bm x} f(\bm x) - \int_{\mathcal M}f(\bm y)\;\mathrm d\bm y\right|
    = \left|\sum_{\bm x\in\mathcal X}\int_{V_{\bm x}} f(\bm y)-f(\bm x)\;\mathrm d\bm y\right|
    \le L\delta_{\mathcal X}\int_{\mathcal M}\mathrm d\bm y.
  \end{equation*}
\end{proof}

\begin{remark}
  \begin{compactenum}[(i)]
  \item Theorem~\ref{theorem:voronoi_quadrature} states that the error of the
    quadrature formula gets small for smooth functions in the sense of a small
    Lipschitz constant and for small mesh norms, like for approximately
    equidistributed nodes.
  \item  Deterministic and probabilistic error estimates are available from \cite{Groechenig92} and \cite{BaGr03}, respectively.
  \item The Voronoi decomposition is dual to the Delaunay triangulation and
    thus can be computed in $\mathcal O(|\mathcal X|\log |\mathcal X|)$ for the Euclidean distance
    in $\dist(\cdot,\cdot)$.
  \end{compactenum}
\end{remark}

Given that the error of the Voronoi quadrature is small we obtain
approximately
\begin{align*}
  \F\herm\bm W\F
  &= \left[\sum_{\bm x\in\mathcal X} w_{\bm x} \e^{2\pi\mathrm i\bm n_1\bm x}
    \overline{\e^{2\pi\mathrm i\bm n_2\bm x}}\right]_{\bm n_1,\bm n_2\in \mathcal I}\\
  &\approx \left[\int_\T \e^{2\pi\mathrm i\bm n_1\bm x}\e^{-2\pi\mathrm i\bm n_2\bm
      x}\;\mathrm d\bm x\right]_{\bm n_1, \bm n_2\in \mathcal I}
  = \bm I \in \mathbb C^{\mathcal I \times \mathcal I}
\end{align*}
where $\bm I$ denotes the identity matrix. Inserting this into the definition
of the hat matrix $\bm H$ we have formally
\begin{equation}\label{eq:appr_hat}
  \bm H = \bm F \left(\F\herm \bm W \F + \lambda \bm{\hat  W}\right)^{-1} \F\herm \bm W
  \approx \bm F \left(\bm I + \lambda \bm{\hat  W}\right)^{-1} \F\herm \bm W
  =: \bm{\tilde H}.
\end{equation}
Analogously to Theorem~\ref{theorem:simplification_torus} we obtain for the
diagonal entries $\tilde h_{\bm x,\bm x}$ of the \emph{approximated hat matrix}
$\bm{\tilde H}$,
\begin{equation*}
  \tilde h_{\bm x,\bm x}
  = w_{\bm x} \sum_{\bm n \in \mathcal I} \frac{1}{1 + \lambda \hat w_{\bm n}}.
\end{equation*}
Together with Theorem~\ref{theorem:simp1} and
Definition~\ref{def:generalizedCV} this motivates the following definition of
approximated cross-validation scores.

\begin{definition}\label{def:cv_tilde}
  The approximated cross-validation score
  $\tilde P(\lambda)$ and the approximated generalized cross-validation score
  $\tilde V(\lambda)$ are defined by
  \begin{equation*}
    \tilde P(\lambda)
    = \sum_{\bm x \in \mathcal X}
    \frac{[\bm {H \bm f - \bm f}]_{\bm x}^{2}}{(1-\tilde h_{\bm x,\bm x})^{2}}
    \quad
    \text{and}
    \quad
    \tilde V(\lambda)
    = \sum_{\bm x \in \mathcal X}
    \frac{[\bm {H \bm f - \bm f}]_{\bm x}^{2}}{(1-\tilde h)^{2}},
  \end{equation*}
  where $\tilde h = \frac{1}{|\mathcal X|} \sum_{\bm x \in \mathcal X} \tilde h_{\bm x,\bm x}$.
\end{definition}

\begin{remark}
\label{remark:approximationisfast}
Algorithm~\ref{algo:cv} is easily modified to compute the approximated scores
by replacing all occurrences of $h_{\bm x, \bm x}$ by
$\tilde h_{\bm x, \bm x}$. The computationally most expensive part remains the
computation of the Tikhonov minimizer
$\bm{\tilde f} = \bm H \bm f = \F(\F\herm\bm W\F+\lambda\bm{\hat
  W})\inv\F\herm\bm W\f$.  Making use of the NFFT  \cite{KeKuPo09, nfft3}
  the
matrix-vector multiplications with $\F$ and $\F\herm$ can be performed with
$\mathcal O(|\mathcal I|\log |\mathcal I|+|\mathcal X|)$ numerical operations.
\end{remark}

In the remainder of this sections we illustrate that the approximated
cross-validation scores can be computed drastically faster while providing a
good approximation to the minimizer of the actual cross-validation score. To
this end we considered scattered sampling points on the one-dimensional torus
$\mathbb T$ as well as on the two-dimensional torus $\mathbb T^{2}$.  In order
generate sufficiently nonuniform sampling points we drew random samples with
respect to the uniform distribution on $[0,1]$ and $[0,1]^{2}$ and squared
them. This leads to sampling sets that are more dense towards the point $0$
and the edges $0 \times [0,1]$ and $[0,1] \times 0$.

In the one-dimensional example we used $|\mathcal X| = 128$ sampling
points and the index set $\mathcal I_{64}^{1d} = \{-32,\dots,31\}$. In the two-dimensional
example we chose $|\mathcal X| = 8192$ and
$\mathcal I_{64}^{2d} = \mathcal I_{64}^{1d}\times\mathcal I_{64}^{1d}$. In
both cases this corresponds to an oversampling factor of two.  As in
Subsection~\ref{sec:torus_equispaced} we chose as a test function the
\textsc{Matlab} peaks function with fixed second argument zero in the
one-dimensional case. Finally, we added $5\%$ Gaussian noise
as depicted in Figure~\ref{fig:t_nonequispaced_1d}, (a) and
Figure~\ref{fig:t_nonequispaced_2d}, (a).

\begin{figure}
  \centering

  \begin{subfigure}[t]{0.4\linewidth}

    \begin{tikzpicture}[font = \footnotesize]
      \begin{axis}[
        legend style={font=\tiny},
        legend pos = south east,
        scale only axis,
        width = 0.9\textwidth,
        height = 4.1cm,
        enlarge x limits = {abs = 0},
        xlabel = x ]

        \addplot[only marks, mark = *,mark options={scale = 0.4, color =
          black}] table[x index = 0, y index = 1]
        {data/t_nonequispaced_1d_noisy.dat};
        \addplot[mark=none,orange,thick] table[x index = 0, y index = 1]
        {data/t_nonequispaced_1d_reconstruction.dat};

        \legend{$\f = \F \hat \f +
          \bm\epsilon$,$\F \tilde \f_{\lambda}$}

      \end{axis}
    \end{tikzpicture}
    \caption{noisy input data $\f = \F \hat \f + \bm\epsilon$ and
      reconstruction $\F \tilde \f_{\lambda}$ with $\lambda$ set to the
      minimizer of $\tilde P(\lambda)$ }
  \end{subfigure}
  \quad
  \begin{subfigure}[t]{0.5\linewidth}
    \begin{tikzpicture}[font = \footnotesize]
      \begin{axis}[ scale only axis, width = 0.8\textwidth, height =
         4cm, xlabel = $\lambda$, xmode = log, axis y line* = left,
        enlarge x limits = {abs = 0}, ytick = {0.65655,0.65645}, yticklabels =
        {$10^{0.65655}$,$10^{0.65645}$}, legend pos = north east, legend cell
        align = left,
        xmin = 10^-4.5, xmax = 10^-1.6,
        ]
\addplot[no marks,thick] table[x index = 0, y index =
        1]{data/t_nonequispaced_1d.dat};

        \addplot [mark=*] coordinates {(5.1670650e-04,4.5336210e+00)};

      \end{axis}
      \begin{axis}[
        scale only axis,
        width = 0.8\textwidth,
        height = 4cm,
        ymax = 0.5,
        xmode = log,
        ymode = log,
        axis y line* = right,
        axis x line = none,
        xmin = 10^-4.5, xmax = 10^-1.6,
        axis line style = {orange},
        yticklabel style = {color = orange},
        enlarge x limits = {abs = 0},
        ytick = {0.4467,0.3548},yticklabels = {$10^{-0.35}$,$10^{-0.45}$},
        legend style={font=\tiny},
        ]
        \addplot[no marks,orange,thick] table[x index = 0, y index = 2]{data/t_nonequispaced_1d.dat};
        \addplot[no marks,orange,dotted,thick] table[x index = 0, y index = 1] {data/t_nonequispaced_1d_ocv_appr.dat};
        \addplot[no marks,orange,dashed,thick] table[x index = 0, y index = 4] {data/t_nonequispaced_1d.dat};
        \addplot[no marks,orange,dashdotted,thick] table[x index = 0, y index = 5] {data/t_nonequispaced_1d.dat};

        \legend{$P(\lambda)$, $\tilde P(\lambda)$, $V(\lambda)$,
          $\tilde V(\lambda)$}; \addlegendimage{solid,thick};
        \addlegendentry{$\lVert \tilde \f_{\lambda} - \hat \f \rVert_{2}$};

        \addplot [mark=*,color=orange] coordinates {(1.0481354e-03,3.3049875e-01)};
        \addplot [mark=*,color=orange] coordinates {(1.8456790e-03,3.3607991e-01)};
        \addplot [mark=*,color=orange] coordinates {(1.6022175e-03,3.5384227e-01)};
        \addplot [mark=*,color=orange] coordinates {(2.1261351e-03,3.5765573e-01)};

      \end{axis}
    \end{tikzpicture}

    \caption{approximation error
      $\lVert \tilde \f_{\lambda} - \hat \f \rVert_{2}$ (black) and cross-validation
      scores (orange)}

  \end{subfigure}

  \caption{Approximation from nonequispaced data: Comparison of the ordinary
    cross-validation score $P(\lambda)$ and the generalized cross-validation
    score $V(\lambda)$ with their approximations $\tilde P(\lambda)$ and
    $\tilde V(\lambda)$ and the approximation error.}
  \label{fig:t_nonequispaced_1d}

\end{figure}
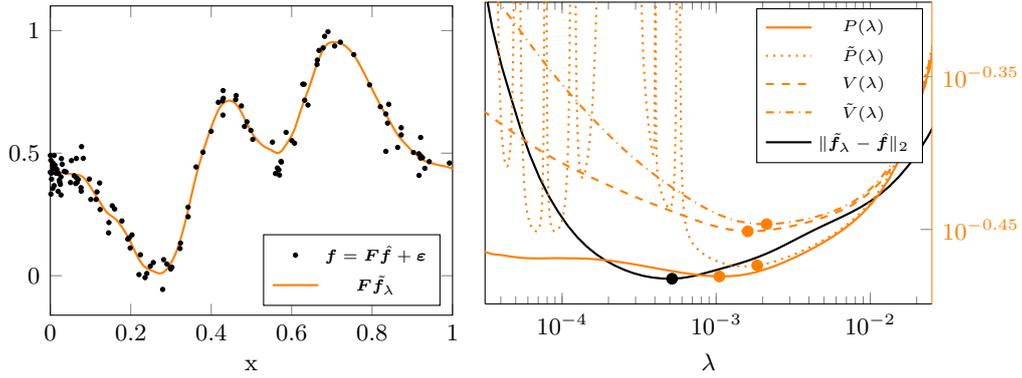

\begin{figure}
  \centering

  \begin{subfigure}[b]{0.25\textwidth}
      \includegraphics[width=\textwidth]{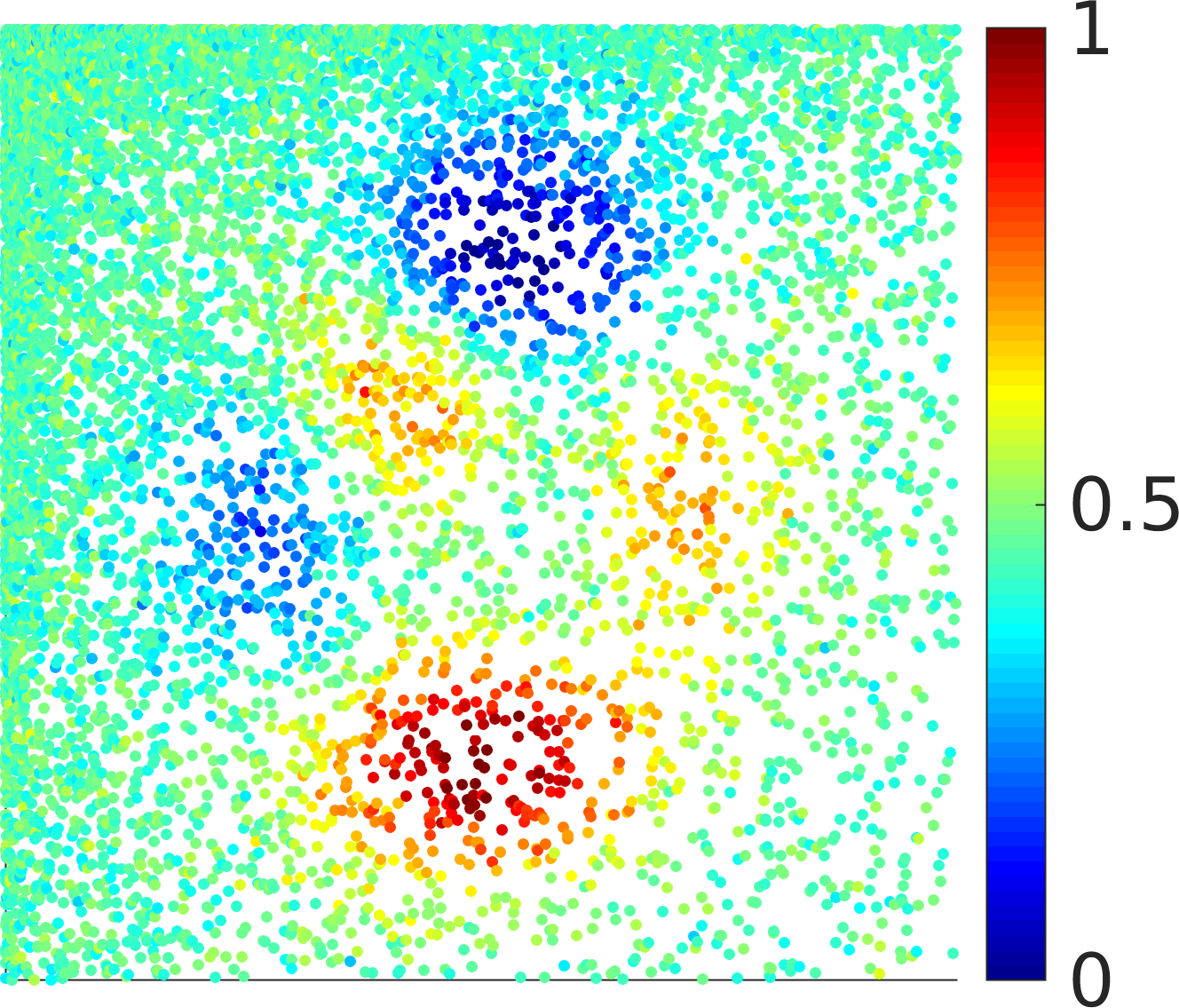}
      \\ \\
      \includegraphics[width=\textwidth]{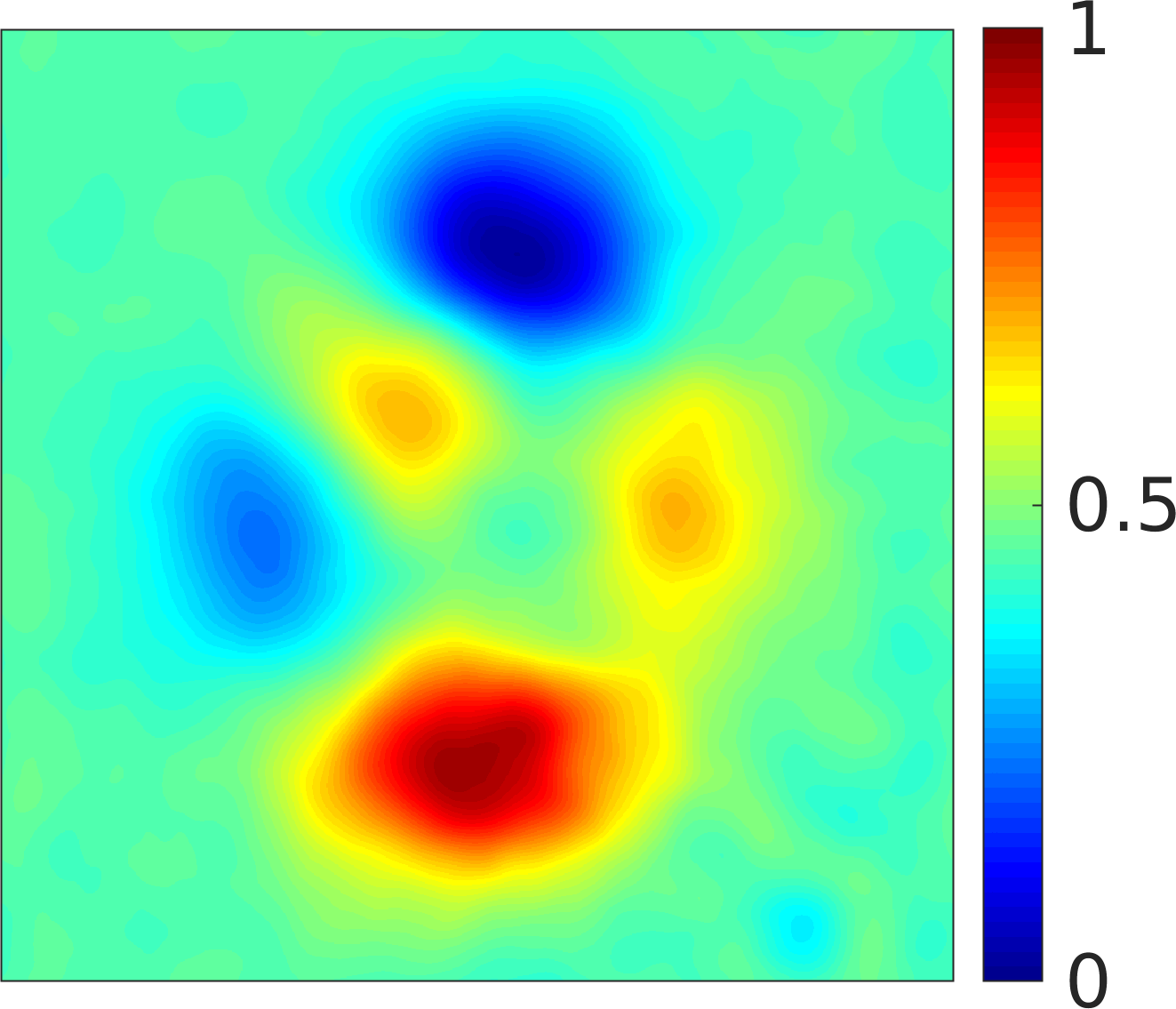}
    \caption{noisy input data and reconstruction}
  \end{subfigure}
  \quad
  \begin{subfigure}[b]{0.5\textwidth}
    \begin{tikzpicture}[font = \footnotesize]
      \begin{axis}[
        scale only axis,
        width             = 0.8\textwidth, height = 4cm,
        xlabel            = $\lambda$,
        xmode             = log,
        ymode             = log,
        axis y line*      = left,
        enlarge x limits  = {abs = 0},
        ytick             = {1.58489,1.9952},
        yticklabels       = {$10^{-1.8}$,$10^{-1.7}$},
        ]
\addplot[no marks,thick] table[x index = 0, y index = 1] {data/t_nonequispaced_2d.dat};
        \addplot [mark=*] coordinates {( 2.6287113e-03,1.4423143)};

      \end{axis}
      \begin{axis}[
        scale only axis,
        width             = 0.8\textwidth, height = 4cm,
        ymax              = 24,
        xmode             = log,
        ymode             = log,
        axis y line*      = right,
        axis x line       = none,
        axis line style   = {orange},
        yticklabel style  = {color = orange},
        enlarge x limits  = {abs = 0},
        legend style={font=\tiny},
        ]
        \addplot[no marks,orange,thick] table[x index = 0, y index = 2] {data/t_nonequispaced_2d.dat};
        \addplot[no marks,orange,dotted,thick] table[x index = 0, y index = 3] {data/t_nonequispaced_2d.dat};
        \addplot[no marks,orange,dashed,thick] table[x index = 0, y index = 4] {data/t_nonequispaced_2d.dat};
        \addplot[no marks,orange,dashdotted,thick] table[x index = 0, y index
        = 5] {data/t_nonequispaced_2d.dat};

        \legend{$P(\lambda)$, $\tilde P(\lambda)$, $V(\lambda)$,
          $\tilde V(\lambda)$}; \addlegendimage{solid,thick};
        \addlegendentry{$\lVert \tilde \f_{\lambda} - \hat \f \rVert_{2}$};

        \addplot [mark=*,color=orange] coordinates {(6.3457218e-03,2.1183233e+01)};
        \addplot [mark=*,color=orange] coordinates {(6.3457218e-03,2.1117441e+01)};
        \addplot [mark=*,color=orange] coordinates {(6.3457218e-03,2.1177888e+01)};
        \addplot [mark=*,color=orange] coordinates {(6.3457218e-03,2.1115331e+01
          )};
      \end{axis}
    \end{tikzpicture}
    \caption{approximation error
      $\lVert \tilde \f_{\lambda} - \hat \f \rVert_{2}$ (black) and cross-validation
      scores (orange)}
  \end{subfigure}

  \caption{Approximation from two-dimensional nonequispaced data: Comparison
    of the ordinary cross-validation score $P(\lambda)$ and the generalized
    cross-validation score $V(\lambda)$ with their approximations
    $\tilde P(\lambda)$ and $\tilde V(\lambda)$ and the approximation error.}
  \label{fig:t_nonequispaced_2d}
\end{figure}

As in both cases the weights $w_{\bm x}$ are far from uniform we may expect
some difference between the ordinary cross-validation score $P(\lambda)$ and
the generalized cross-validation score $V(\lambda)$. This can be observed in
the one-dimensional example, cf.~Figure~\ref{fig:t_nonequispaced_1d}. In the
two-dimensional example both functionals coincide surprisingly well,
cf.~Figure~\ref{fig:t_nonequispaced_2d}. Furthermore, the weights are
 completely uncorrelated to the homogeneous noise level and, thus, contradict
 with Remark~\ref{remark:problemalteringweights}.

Judging the approximation of the exact cross-validation scores $P(\lambda)$
and $V(\lambda)$ by the approximated scores $\tilde P(\lambda)$ and
$\tilde V(\lambda)$ we observe that for small regularization parameters
$\lambda$ the score $\tilde P(\lambda)$ contains several peaks for both examples.
These peaks occur because we overestimate the diagonal entries $h_{x,x}$ such that they attain values around one and summands of the ordinary cross-validation score \eqref{eq:cv_simple} diverge.
In contrast Lemma~\ref{lemma:h<1} says that these diagonal entries are always smaller than one.
Nevertheless, the minimizer of all four functionals $P,\tilde P, V,
\tilde V$ are very close together for the two-dimensional example while for
the one-dimensional example the minimizer of $P$ and $\tilde P$ are
closer to the $L_2(\T)$-optimal regularization parameter compared to the
minimizer of $V$ and $\tilde V$. A natural idea to avoid the oscillatory
regions of the functional $\tilde P$ would be to use the minimizer of $\tilde
V$ as initial guess for minimizing $\tilde P$.

The central reason for preferring the functional $\tilde P$ and $\tilde V$
over the functionals $P$, $V$ is that they are faster to
compute. Indeed, if we fix the number of iterations for computing the Tikhonov minimizer, $P(\lambda)$ and $V(\lambda)$ can be acquired in
$\mathcal O(|\mathcal I| |\mathcal X| \log |\mathcal I| + |\mathcal X|^{2})$
numerical operations for a single regularization parameter $\lambda$, which compares to
$\mathcal O(|\mathcal I| \log |\mathcal I| + |\mathcal X|)$ numerical
operations for the evaluation of $\tilde P$ and $\tilde V$. In our toy example
the computation of $P$ and $V$ took approximately $1.61$ for the one-dimensional and $1278$ seconds for the two-dimensional case,
while the computation of $\tilde P$ and $\tilde V$ was performed within
$0.02$ and $0.16$ seconds averaged over all tested $\lambda$.

\section{Cross-validation on the unit interval}
\label{sec:i}

In this section, we consider the cross-validation scores for nonperiodic
functions on the unit interval $[-1,1]$ with respect to the \emph{Chebyshev polynomials}
$$
  T_n(x) = \cos(n\arccos x)
  \quad n = 0,\dots,N-1
$$
up to polynomial degree $N\in\mathbb N$.
In this setting the Fourier matrix becomes
$$
  \bm F = \left[ T_n(x) \right]_{x\in\mathcal X,n=0,\dots,N-1}
$$
for a set of nodes $\mathcal X$.

\subsection{Exact Quadrature}
\label{sec:exact-quadrature}

Similarly as for functions on the torus we consider the case of exact quadrature first.
Therefore we remind that the Chebyshev polynomials are orthogonal with respect to the inner product
$$
 (f,g) = \int_{-1}^1 \frac{f(x)g(x)}{\sqrt{1-x^2}}\;\mathrm dx
$$
and are normalized such that
$$
  (T_{n_1},T_{n_2})
  = \begin{cases}
    \pi &\colon n_1=n_2=0,\\
    \pi/2 &\colon n_1=n_2\neq 0,\\
    0 &\colon n_1\neq n_2.
  \end{cases}
$$

Assuming that the nodes $\mathcal X$ and weights $\bm W$ form a quadrature rule that is exact up to polynomial degree $2N-2$ the diagonal entries of the hat matrix $\bm H$ can be given expicitly using the following theorem.

\begin{theorem}
\label{theorem:i_diagonals_quadrature}
Let the nodes $\mathcal X$ and the weights $\bm W = \diag(w_x)_{x\in\mathcal X}$ form a quadrature rule which is exact up to polynomial degree $2N-2$ and let $\bm{\hat W}=\diag(\hat w_0,\dots,\hat w_{N-1})$ be the weights in frequency domain.
Then the diagonal entries $h_{x,x}$ of the hat matrix $\bm H = \F(\F\herm\bm W\F+\lambda\bm{\hat W})\inv\F\bm W$ corresponding to the nodes $x\in\mathcal X$ satisfy
$$
  h_{x,x} = \frac{w_x}2 \left(\sum_{n=1}^{N-1} \frac 1{\pi/2+\lambda \hat w_n} \cos(2n\arccos x)
  + \sum_{n=1}^{N-1}\frac 1{\pi/2+\lambda\hat w_n}
  + \frac 2{\pi+\lambda\hat w_0}\right).
$$
\end{theorem}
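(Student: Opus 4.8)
The plan is to mimic the proof of Theorem~\ref{theorem:simplification_torus}: exploit the quadrature exactness to turn $\bm A = \F\herm\bm W\F + \lambda\bm{\hat W}$ into a diagonal matrix, invert it entrywise, and then read off the diagonal of $\bm H = \F\bm A\inv\F\herm\bm W$.

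First I would look at the $(n_1,n_2)$ entry of $\F\herm\bm W\F$, which is $\sum_{x\in\mathcal X} w_x\,T_{n_1}(x)T_{n_2}(x)$. For $n_1,n_2\in\{0,\dots,N-1\}$ the product $T_{n_1}T_{n_2}$ is a polynomial of degree at most $2N-2$, so by the assumed exactness of $(\mathcal X,\bm W)$ the sum equals the integral of $T_{n_1}T_{n_2}$ against the Chebyshev weight $\tfrac1{\sqrt{1-x^2}}$, i.e. the inner product $(T_{n_1},T_{n_2})$. The stated orthogonality relations then give
$$
  \F\herm\bm W\F = \diag\bigl(\pi,\tfrac\pi2,\dots,\tfrac\pi2\bigr),
  \qquad\text{hence}\qquad
  \bm A\inv = \diag\!\left(\frac1{\pi+\lambda\hat w_0},\frac1{\tfrac\pi2+\lambda\hat w_1},\dots,\frac1{\tfrac\pi2+\lambda\hat w_{N-1}}\right).
$$

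Next, since $\bm W$ is diagonal, the $x$-th diagonal entry of $\bm H$ is $h_{x,x} = w_x\,\F_{x,:}\bm A\inv\F_{x,:}\herm$ as in Remark~\ref{remark:cv_canonical_calculation}, and because the Chebyshev polynomials are real-valued this becomes
$$
  h_{x,x}
  = w_x\left(\frac{T_0(x)^2}{\pi+\lambda\hat w_0} + \sum_{n=1}^{N-1}\frac{T_n(x)^2}{\tfrac\pi2+\lambda\hat w_n}\right).
$$
Finally I would insert $T_0(x)=1$ and the half-angle identity $T_n(x)^2 = \cos^2(n\arccos x) = \tfrac12\bigl(1+\cos(2n\arccos x)\bigr)$, split each summand into a constant contribution and a $\cos(2n\arccos x)$ contribution, and pull out the common factor $w_x/2$; collecting the three resulting sums yields exactly the claimed formula.

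I do not expect a genuine obstacle here — the argument is a short computation. The only things to watch are the separate handling of the index $n=0$, where the normalization $(T_0,T_0)=\pi$ differs from $(T_n,T_n)=\pi/2$ for $n\ge1$, and the observation that the exactness degree $2N-2$ is precisely what all the products $T_{n_1}T_{n_2}$ with $n_1,n_2\le N-1$ require (the worst case being the diagonal term $T_{N-1}^2$).
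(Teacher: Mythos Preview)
Your proposal is correct and follows exactly the paper's approach: use the exactness of the quadrature up to degree $2N-2$ to identify $\F\herm\bm W\F$ with $\diag(\pi,\pi/2,\dots,\pi/2)$, invert $\bm A$ entrywise, and then apply the identity $\cos^2\theta=\tfrac12(1+\cos 2\theta)$ to $T_n(x)^2$. Your write-up is in fact more explicit than the paper's about why the exactness degree $2N-2$ suffices and about handling the $n=0$ term separately, but the argument is the same.
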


\begin{proof}
Similar to Theorem~\ref{theorem:simplification_torus} we obtain $\F\herm\bm W\F+\lambda\bm{\hat W} = \diag(\pi+\lambda\hat w_0, \pi/2+\lambda\hat w_1, \dots, \pi/2+\lambda\hat w_{N-1})$.
Putting this into the formula for the diagonal elements of the hat matrix obtain
$$
  h_{x,x} = w_x\left(\sum_{n=1}^{N-1} \frac 1{\pi/2+\lambda\hat w_n} \cos^2(n\arccos x) + \frac 1{\pi+\lambda\hat w_0}\right).
$$
In combination with the addition theorem $\cos(2x) = 2\cos^2 x-1$ this proves the assertion.
\end{proof}

\subsection{Chebyshev nodes}

The most basic examples of an exact quadrature formula on the interval is
probably quadrature at Chebyshev nodes. In order to restate
Theorem~\ref{theorem:i_diagonals_quadrature} for this case we require the
discrete cosine transforms from first up to third kind
\begin{align*}
  \bm C_{N+1}^\mathrm I
  &\coloneqq \sqrt\frac 2N \left[\bm\gamma_N(n)\bm\gamma_N(m)\cos\left(\frac{nm\pi}N\right)\right]_{n,m=0}^N, \\
  \bm C_N^\mathrm{II}
  &\coloneqq \sqrt\frac 2N
    \left[\bm\gamma_N(n)\cos\left(\frac{n(2m+1)\pi}{2N}\right)\right]_{n,m=0}^{N-1},
    \quad
    \bm C_N^\mathrm{III}
    \coloneqq\left(\bm C_N^\mathrm{II}\right)\transp
\end{align*}
with $\bm\gamma(0)=\bm\gamma(N)\coloneqq \sqrt 2/2$ and
$\bm\gamma(n)\coloneqq 1$ for $n=1,\dots,N-1$.  The corresponding
matrix-vector products can be calculated using $\mathcal O(N \log N)$
arithmetic operations, cf.~\cite[Chapter 6]{PlPoStTa18}.

Using the fact that $\bm C_N^\mathrm{III}$ is orthonormal, cf.~\cite[Sec. 3.5]{PlPoStTa18}, we acquire
$$
  \bm I
  = C_N^\mathrm{II} C_N^\mathrm{III}
  = \left[\gamma_N(n)\cos\left(\frac{n(2m+1)\pi}{2N}\right)\right]_{n,m=0}^{N-1}
    \frac 2N \left[\gamma_N(n)\cos\left(\frac{n(2m+1)\pi}{2N}\right)\right]_{m,n=0}^{N-1}.
$$
If we multiply with $\diag(\sqrt \pi,\sqrt{\pi/2},\dots,\sqrt{\pi/2})$ from both sides we obtain
$$
  \diag(\pi,\pi/2,\dots,\pi/2)
  = \left[\cos\left(\frac{n(2m+1)\pi}{2N}\right)\right]_{n,m=0}^{N-1}
    \frac \pi N \left[\cos\left(\frac{n(2m+1)\pi}{2N}\right)\right]_{m,n=0}^{N-1}.
$$
Putting this into the form $\F\herm\bm W\F$ we see that the \emph{Chebyshev nodes of first kind}
$$
  x_m = \cos\left(\frac{(2m+1)\pi}{2N}\right),\quad m=0,\dots,N-1
$$
and the uniform weights $w_x = \pi/N$ form a quadrature rule which is exact up to degree $2N-2$.

For these specific nodes Theorem~\ref{theorem:i_diagonals_quadrature} simplifies to:

\begin{theorem}
  \label{theorem:i_diagonals}
  Let $\mathcal X=\{x_1,\dots,x_m\}$ be the Chebyshev nodes of first kind and $w_{x_m} = \pi/N$.
  Then the diagonal entries $h_{x_m,x_m}$ of the hat matrix \eqref{eq:hat} can be written as
  \begin{equation*}
    h_{x_m,x_m}
    = \frac{w_{x_m}}2 \left(\frac {\sqrt{N/2}}{\gamma_{2N}(m)} \left[\bm C_{2N+1}^\mathrm I \bm b\right]_{2m+1}
    + \sum_{n=1}^{N-1}\frac 1{\pi/2+\lambda\hat w_n}\right), \quad m=0,\dots,N-1
  \end{equation*}
  with
  \begin{equation*}
    \bm b = (b_0,\dots,b_{2N})\transp
    = \left(\frac{2\sqrt 2}{\pi+\lambda\hat w_0},0,\frac{1}{\pi/2+\lambda\hat w_1},0,\dots,\frac{1}{\pi/2+\lambda\hat w_{N-1}},0,0\right)\transp.
  \end{equation*}
\end{theorem}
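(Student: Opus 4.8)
The plan is to specialize Theorem~\ref{theorem:i_diagonals_quadrature} to the concrete point set of Chebyshev nodes of the first kind and then to recognize the arising trigonometric sum as a slice of a discrete cosine transform of the first kind, so that it can be evaluated for all nodes simultaneously in $\mathcal O(N\log N)$ operations. First I would record that, by the computation preceding the theorem (orthonormality of $\bm C_N^{\mathrm{III}}$), the nodes $x_m=\cos\big((2m+1)\pi/(2N)\big)$ together with the uniform weights $w_{x_m}=\pi/N$ form a quadrature rule that is exact up to degree $2N-2$, so that Theorem~\ref{theorem:i_diagonals_quadrature} is applicable. Since $m$ ranges over $0,\dots,N-1$, the angle $\arccos x_m=(2m+1)\pi/(2N)$ lies in $(0,\pi)$, hence $\cos(2n\arccos x_m)=\cos\!\big(n(2m+1)\pi/N\big)$.

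Substituting this into the formula of Theorem~\ref{theorem:i_diagonals_quadrature}, the $x$-dependent part of $h_{x_m,x_m}$ becomes
\[
  \frac{2}{\pi+\lambda\hat w_0}+\sum_{n=1}^{N-1}\frac{1}{\pi/2+\lambda\hat w_n}\cos\!\Big(\frac{n(2m+1)\pi}{N}\Big),
\]
whereas the $x$-independent summand $\sum_{n=1}^{N-1}(\pi/2+\lambda\hat w_n)\inv$ and the overall factor $w_{x_m}/2$ are simply carried along, and will reappear unchanged in the final statement. So the entire content of the theorem is to identify the displayed sum with $\tfrac{\sqrt{N/2}}{\bm\gamma_{2N}(m)}\,[\bm C_{2N+1}^{\mathrm I}\bm b]_{2m+1}$.

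The key observation is that, writing $\cos(n(2m+1)\pi/N)=\cos\big((2m+1)(2n)\pi/(2N)\big)$, the above expression is, as a function of $m$, the value at the odd index $2m+1$ of the length-$(2N+1)$ transform $\bm C_{2N+1}^{\mathrm I}$ applied to a vector $\bm b\in\mathbb R^{2N+1}$ supported on the even indices. Matching coefficients forces $b_{2n}=(\pi/2+\lambda\hat w_n)\inv$ for $n=1,\dots,N-1$ and $b_{2N}=0$, and the zeroth entry must absorb the boundary weight $\bm\gamma_{2N}(0)=\sqrt2/2$ appearing in the definition of $\bm C^{\mathrm I}$, which pins down $b_0=2\sqrt2/(\pi+\lambda\hat w_0)$ — precisely the vector $\bm b$ in the statement. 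Tracking the normalizing prefactor $\sqrt{2/(2N)}$ of $\bm C_{2N+1}^{\mathrm I}$ together with the output weight $\bm\gamma_{2N}(2m+1)$ then produces the scalar in front of $[\bm C_{2N+1}^{\mathrm I}\bm b]_{2m+1}$, and collecting all terms yields the asserted identity; since $\bm C_{2N+1}^{\mathrm I}\bm b$ is computable in $\mathcal O(N\log N)$ arithmetic operations, all diagonal entries are obtained at once.

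The main obstacle is purely bookkeeping: correctly distributing the half-weights $\bm\gamma_{2N}(0),\bm\gamma_{2N}(2N)$ and the factor $\sqrt{2/(2N)}$ between the vector $\bm b$ and the scalar prefactor, and checking that the even/odd index embedding is compatible with the reflection symmetry implicit in the cosine kernel. No analytic difficulty is expected beyond this and the elementary identity $\cos(2y)=2\cos^2 y-1$ already used in the proof of Theorem~\ref{theorem:i_diagonals_quadrature}.
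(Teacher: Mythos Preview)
Your proposal is correct and follows essentially the same route as the paper: specialize Theorem~\ref{theorem:i_diagonals_quadrature} to the Chebyshev nodes, substitute $\arccos x_m=(2m+1)\pi/(2N)$, and then recognize the resulting cosine sum at the odd indices of a length-$(2N+1)$ DCT-I applied to a vector supported on the even indices. The paper's own proof is slightly terser on the bookkeeping of the $\bm\gamma_{2N}$ factors and the normalization $\sqrt{2/(2N)}$, but the identification you describe is exactly the one used there.
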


\begin{proof}
According to Theorem~\ref{theorem:i_diagonals_quadrature} we have
$$
  h_{x_m,x_m}
  = \frac{w_{x_m}}2 \left(\sum_{n=1}^{N-1} \frac 1{\pi/2+\lambda \hat w_n} \cos\left(\frac{n(2m+1)\pi}{N}\right)
  + \sum_{n=1}^{N-1}\frac 1{\pi/2+\lambda\hat w_n}
  + \frac 2{\pi+\lambda\hat w_0}\right).
$$
Using the coefficients $\bm b$ the first sum can be expressed with twice the frequency
$$
  h_{x_m,x_m}
  = \frac{w_{x_m}}2 \left(\frac 1{\gamma_{2N}(m)}\sum_{n=0}^{2N} b_n \gamma_{2N}(n)\gamma_{2N}(m) \cos\left(\frac{n(2m+1)\pi}{2N}\right)
  + \sum_{n=1}^{N-1}\frac 1{\pi/2+\lambda\hat w_n}\right),
$$
which is the cosine transform of first kind.
\end{proof}

\begin{corollary}
\label{corollary:i_fast_exact}
For fixed $\lambda$ the ordinary cross-validation score $P(\lambda)$ and the generalized cross-validation score $V(\lambda)$ on the unit intervall for Chebyshev nodes of first kind can be computed in $\mathcal O(N\log N)$ using Algorithm~\ref{algo:cv}.
\end{corollary}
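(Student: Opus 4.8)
The plan is to go through the three steps of Algorithm~\ref{algo:cv} one at a time and bound the cost of each using the special structure of the Chebyshev setting, most importantly the identification of the Fourier matrix $\F$ with a (rescaled) discrete cosine transform. Recall that for the Chebyshev nodes of first kind $x_m = \cos\big((2m+1)\pi/(2N)\big)$ the entries of $\F$ are $T_n(x_m) = \cos\big(n(2m+1)\pi/(2N)\big)$, so $\F = \sqrt{N/2}\,\bm C_N^{\mathrm{III}}\diag\big(\gamma_N(n)\big)\inv$ and, since $\F$ is real, $\F\herm = \F\transp = \sqrt{N/2}\,\diag\big(\gamma_N(n)\big)\inv\bm C_N^{\mathrm{II}}$. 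Hence a matrix--vector product with $\F$ or $\F\herm$ amounts to one diagonal scaling plus one fast cosine transform, and costs $\mathcal O(N\log N)$ operations, cf.~\cite[Chapter 6]{PlPoStTa18}.

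For Step~1, which computes $\ftilde = \bm H\f = \F\bm A\inv\F\herm\bm W\f$ with $\bm H$ from \eqref{eq:hat}: the weight matrix $\bm W = \diag(\pi/N)$ is applied in $\mathcal O(N)$; by the computation in the proof of Theorem~\ref{theorem:i_diagonals_quadrature} the matrix $\bm A = \F\herm\bm W\F+\lambda\bm{\hat W} = \diag(\pi+\lambda\hat w_0,\pi/2+\lambda\hat w_1,\dots,\pi/2+\lambda\hat w_{N-1})$ is diagonal, so $\bm A\inv$ is applied in $\mathcal O(N)$; and the two remaining products with $\F\herm$ and $\F$ each cost $\mathcal O(N\log N)$ by the observation above. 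Thus Step~1 runs in $\mathcal O(N\log N)$ operations.

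For Step~2, which computes the diagonal entries $h_{x_m,x_m}$ and their mean $h$: this is exactly the content of Theorem~\ref{theorem:i_diagonals}. The auxiliary vector $\bm b\in\R^{2N+1}$ is assembled entrywise in $\mathcal O(N)$; the product $\bm C_{2N+1}^{\mathrm I}\bm b$ is a single cosine transform of the first kind of length $2N+1$ and is computed in $\mathcal O(N\log N)$; the $m$-independent sum $\sum_{n=1}^{N-1}\big(\pi/2+\lambda\hat w_n\big)\inv$ is precomputed once in $\mathcal O(N)$. Reading off the odd-indexed entries $[\bm C_{2N+1}^{\mathrm I}\bm b]_{2m+1}$, scaling by $\sqrt{N/2}/\gamma_{2N}(m)$, adding the precomputed sum, and multiplying by $w_{x_m}/2 = \pi/(2N)$ then gives all $N$ values $h_{x_m,x_m}$ in $\mathcal O(N)$ further operations; averaging produces $h$ in $\mathcal O(N)$. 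So Step~2 is again $\mathcal O(N\log N)$.

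Finally, Step~3 only evaluates the scores $P(\lambda)$ from \eqref{eq:cv_simple} and $V(\lambda)$ from Definition~\ref{def:generalizedCV} out of the already-computed quantities $\ftilde$, $\f$, the $h_{x_m,x_m}$ and $h$; by Lemma~\ref{lemma:h<1} the denominators $1-h_{x_m,x_m}$ and $1-h$ do not vanish, and the two sums are formed in $\mathcal O(N)$ operations. Adding the three contributions yields the claimed overall complexity $\mathcal O(N\log N)$. The only genuinely substantive point is the one made at the outset, namely that multiplication by $\F$ and $\F\herm$ is a rescaled $\bm C_N^{\mathrm{III}}$ respectively $\bm C_N^{\mathrm{II}}$ and therefore admits a fast algorithm; everything else is bookkeeping once Theorems~\ref{theorem:i_diagonals_quadrature} and~\ref{theorem:i_diagonals} are available.
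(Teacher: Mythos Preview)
Your proof is correct and follows essentially the same approach as the paper's own proof: reduce multiplication by $\F$ and $\F\herm$ to fast discrete cosine transforms, observe that $\bm A$ is diagonal so Step~1 costs $\mathcal O(N\log N)$, and invoke Theorem~\ref{theorem:i_diagonals} for Step~2. You simply spell out the three steps of Algorithm~\ref{algo:cv} and the identification $\F = \sqrt{N/2}\,\bm C_N^{\mathrm{III}}\diag(\gamma_N(n))\inv$ in more detail than the paper does.
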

\begin{proof}
Because multiplying with $\F$ and $\F\herm$ can be done using the fast discrete cosine transform, we see that applying the hat matrix can be done in $\mathcal O(N\log N)$ and Theorem~\ref{theorem:i_diagonals} allows us to compute the diagonal entries of the hat matrix in $\mathcal O(N\log N)$.
\end{proof}

To emphasize our results numerically we chose the peaks sample function $f$
from \mbox{\textsc{Matlab}} and fixed the second argument to zero.  We
evaluated $f$ in $N=128$ Chebyshev nodes and added 5\% Gaussian noise as one
can see in Figure~\ref{fig:unitintervall}, (a).  To choose $\bm{\hat W}$ we
made use of the following statement from \cite[Theorem 7.1]{Tref13} which
relates the smoothness of $f$ to the decay of the Chebyshev coefficients
$\bm a$: If for $\nu\ge 0$ the derivatives up to $f^{(\nu-1)}$ are absolute
continuous and $f^{(\nu)}$ has bounded variation $V$ then
$|a_k| \le 2V/(\pi(k-\nu)^{\nu+1})$.  Because in general we do not know
anything about the smoothness of the function $f$ we chose $\hat w_n=n^3$ as
weights which corresponds to a function with one absolute continuous
derivative.  We used Algorithm~\ref{algo:cv} to calculate the ordinary
cross-validation score $P(\lambda)$ and the generalized cross-validation score
$V(\lambda)$ for $\lambda\in[2^{-16},2^{-11}]$ and plotted the regularization
for the $\lambda$ with the smallest corresponding ordinary cross-validation
score as one can see in Figure~\ref{fig:unitintervall}, (b).

\begin{figure}
  \centering

  \begin{subfigure}[t]{0.4\linewidth}
    \begin{tikzpicture}[font = \footnotesize]
      \begin{axis}[
        scale only axis,
        width             = 0.9\textwidth, height = 4.1cm,
        enlarge x limits  = {abs = 0},
        xlabel            = $x$,
        legend style={font=\tiny},
        legend pos = south east,
        ]
        \addplot[mark = *,mark options={scale = 0.4, color = black},only marks]
        table[x index = 0, y index = 1] {data/i_example.dat};
        \addplot[mark = none,orange,thick] table[x index = 0, y index = 2]
        {data/i_example.dat};

        \legend{$\f = \F \hat \f + \bm\epsilon$,$\F \tilde \f_{\lambda}$}
      \end{axis}
    \end{tikzpicture}
    \caption{noisy input data $\f = \F \hat \f + \bm\epsilon$ and
      reconstruction $\F \tilde \f_{\lambda}$ with $\lambda$ set to the
      minimizer of $P(\lambda)$ }
  \end{subfigure}
  \quad
  \begin{subfigure}[t]{0.5\linewidth}
    \begin{tikzpicture}[font = \footnotesize]
      \begin{axis}[
        scale only axis,
        width             = 0.7\textwidth, height = 4cm,
        xlabel            = $\lambda$,
        xmin = 3.82e-06, xmax = 9.76e-04,
        xmode             = log,
        ymode             = log,
        axis y line*      = left,
        enlarge x limits  = {abs = 0},
        legend pos        = north east,
        ]
        \addplot[no marks,thick] table[x index = 0, y index = 1] {data/i.dat};
        \addplot [mark=*] coordinates {(2.0027719e-04,5.2918152e-02)};
      \end{axis}
      \begin{axis}[
        scale only axis,
        width             = 0.7\textwidth, height = 4cm,
        xmode             = log,
        xmin = 3.82e-06, xmax = 9.76e-04,
        ymode             = log,
        axis y line*      = right,
        axis x line       = none,
        axis line style   = {orange},
        yticklabel style  = {color = orange},
        enlarge x limits  = {abs = 0},
        legend style={font=\tiny},
        ]
        \addplot[no marks,orange,thick] table[x index = 0, y index = 2] {data/i.dat};
        \addplot[no marks,orange,dashed,thick] table[x index = 0, y index = 3]{data/i.dat};

        \legend{$P(\lambda)$, $V(\lambda)$};
        \addlegendimage{solid,thick};
        \addlegendentry{$\|\F\ftilde_\lambda-f\|_{L_2}$};
        \addplot [mark=*,color=orange] coordinates {( 2.5114642e-04,4.9769009e-01)};
        \addplot [mark=*,color=orange] coordinates {( 2.5114642e-04,4.9801400e-01)};
      \end{axis}
    \end{tikzpicture}
    \caption{ $L_2([-1,1],(1-x^2)^{-1/2})$ approximation error (black) and cross-validation
      scores (orange)}
  \end{subfigure}
  \caption{Approximation on the unit interval from data at Chebyshev nodes:
    Comparison of the ordinary cross-validation score $P(\lambda)$ and the
    generalized cross-validation score $V(\lambda)$ with the
    approximation error.}

  \label{fig:unitintervall}
\end{figure}
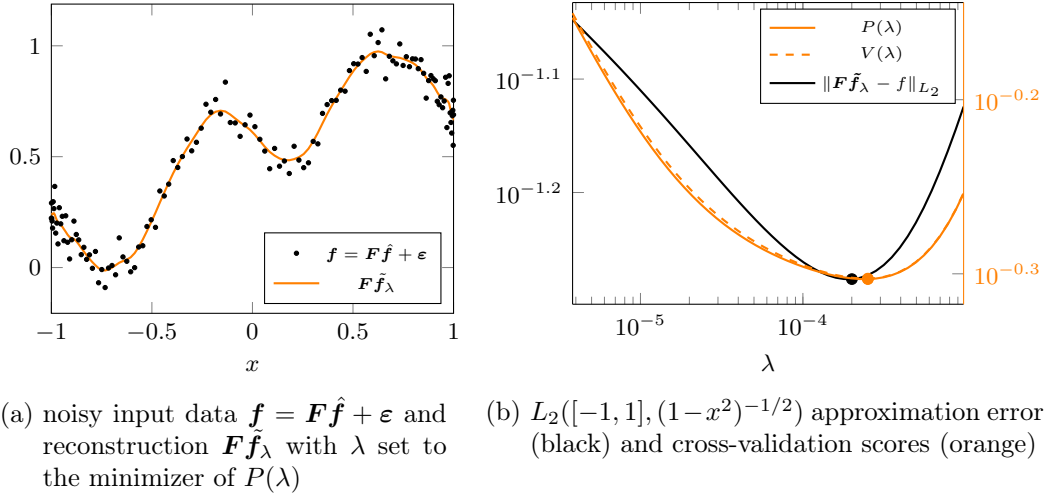

We observe that the ordinary cross-validation score and the generalized
cross-validation score differ only slightly and their minima are close to the
$L_{2}([-1,1],(1-x^{2})^{-1/2})$ optimal $\lambda$.

\subsection{Approximative Quadrature}
\label{sec:appr-quadr}

In this section, we consider arbitrary, ordered nodes
$x_{m} \in \mathcal X \subset [-1,1]$, $m=0,\ldots,M$.
The corresponding cosine transforms can be computed using the nonequispaced discrete cosine transform, cf.~\cite{fepo02}, in $\mathcal O(N\log N+|\mathcal X|)$ where $N$ is the bandwidth.
As in Section~\ref{subs:torus_app} we determine approximate quadrature
weights $w_{x_m}$ for $m = 0,\ldots,M$ that allow us to efficiently estimate the
diagonal entries of the hat matrix $\bm H$.  Since we consider the unit
interval with the non-uniform weight function $(1-x^{2})^{-\frac12}$ it is not
a good idea to compute Voronoi weights directly. Instead, we consider the
corresponding periodic approximation problem on the unit circle with constant
weight by substituting $y_{m} = \arccos x_{m} \in [0,\pi]$ and use Voronoi
weights with respect to $y_{m}$, i.e.,
\begin{equation}
  \label{eq:wx_interval}
  w_{x_m} =
  \begin{cases}
    \frac{y_{0} + y_{1}}2, \quad & m = 0,\\
    \frac{y_{m+1} - y_{m-1}}2, \quad &m = 1,\ldots,M-1,\\
    \pi-\frac{y_{M-1} + y_{M}}2, \quad & m = M.
  \end{cases}
\end{equation}

\begin{remark}
  Let $x_{m}$ be the Chebyshev nodes of first kind. Then the quadrature
  weights \eqref{eq:wx_interval} coincide with the exact quadrature weights
  from Section~\ref{sec:exact-quadrature}.
\end{remark}

Analogously to Section~\ref{subs:torus_app} we use the approxmiated hat matrix $\bm{\tilde H}$ from \eqref{eq:appr_hat} for ease of computation.

\begin{remark}
  \label{remark:i_fast_appr}
  Using Theorem~\ref{theorem:i_diagonals_quadrature} the diagonal entries can be calculated efficiently with one nonequispaced discrete cosine transform.

  With the given tools we can modify Algorithm~\ref{algo:cv} to compute
  $\tilde P(\lambda)$ and $\tilde V(\lambda)$ from
  Definition~\ref{def:cv_tilde} in $\mathcal O(N\log N+|\mathcal X|)$ arithmetic
  operations given a fixed number of iterations to compute the Tikhonov
  minimizer.
\end{remark}

To exemplify our results we chose $128$ uniformly distributed nodes on the
unit interval which we perturbed by $5$\% Gaussian noise. Note that
uniformly distributed nodes are far from optimal in the setting of polynomial
interpolation on the interval. As in case of exact quadrature we set the
bandwith equal to the number of nodes, i.e., $N = |\mathcal X|$.  As the Voronoi weights
resamble quadrature weights the choice of the bandwidth $N$ is critical
because in the case of $|\mathcal X|<N$ one can not expect to get an exact quadrature
formula.  As test function we used again the \textsc{Matlab} peaks function
with fixed second argument.  Then we computed $P(\lambda)$,
$\tilde P(\lambda)$, $V(\lambda)$, and $\tilde V(\lambda)$ for
$\lambda\in[2^{-18},2^{-11}]$.  The results can be seen in
Figure~\ref{fig:i_nonequispaced}.  \begin{figure}
  \centering

  \begin{subfigure}[t]{0.4\linewidth}

    \begin{tikzpicture}[font = \footnotesize]
      \begin{axis}[
        legend style={font=\tiny},
        legend pos = south east,
        scale only axis,
        width = 0.9\textwidth,
        height = 4.1cm,
        enlarge x limits = {abs = 0},
        xlabel = x ]

        \addplot[only marks, mark = *,mark options={scale = 0.4, color =
          black}] table[x index = 0, y index = 1]
        {data/i_nonequispaced_noisy.dat};
        \addplot[mark=none,orange,thick] table[x index = 0, y index = 1]
        {data/i_nonequispaced_reconstruction.dat};

        \legend{$\f = \F \hat \f +
          \bm\epsilon$,$\F \tilde \f_{\lambda}$}

      \end{axis}
    \end{tikzpicture}
    \caption{noisy input data $\f = \F \hat \f + \bm\epsilon$ and
      reconstruction $\F \tilde \f_{\lambda}$ with $\lambda$ set to the
      minimizer of $\tilde P(\lambda)$ }
  \end{subfigure}
  \quad
  \begin{subfigure}[t]{0.5\linewidth}
    \begin{tikzpicture}[font = \footnotesize]
      \begin{axis}[ scale only axis, width = 0.8\textwidth, height =
         4cm, xlabel = $\lambda$, xmode = log, ymode = log, axis y line* = left,
        enlarge x limits = {abs = 0}, legend pos = north east, legend cell
        align = left,
        xmin = 1.5259e-5, xmax = 9.7656e-4,
        ]
\addplot[no marks,thick] table[x index = 0, y index =
        1]{data/i_nonequispaced.dat};

        \addplot [mark=*] coordinates {( 2.3070987e-04,5.4348358e-02)};

      \end{axis}
      \begin{axis}[
        scale only axis,
        width = 0.8\textwidth,
        height = 4cm,
        ymax = 0.5,
        xmode = log,
        ymode = log,
        axis y line* = right,
        axis x line = none,
        axis line style = {orange},
        yticklabel style = {color = orange},
        enlarge x limits = {abs = 0},
        ytick = {0.4467,0.3548},yticklabels = {$10^{-0.35}$,$10^{-0.45}$},
        legend style={font=\tiny},
        xmin = 1.5259e-5, xmax = 9.7656e-4,
        ]
        \addplot[no marks,orange,thick] table[x index = 0, y index = 2]{data/i_nonequispaced.dat};
        \addplot[no marks,orange,dotted,thick] table[x index = 0, y index = 1] {data/i_nonequispaced_ocv_appr.dat};
        \addplot[no marks,orange,dashed,thick] table[x index = 0, y index = 4] {data/i_nonequispaced.dat};
        \addplot[no marks,orange,dashdotted,thick] table[x index = 0, y index = 5] {data/i_nonequispaced.dat};

        \legend{$P(\lambda)$, $\tilde P(\lambda)$, $V(\lambda)$,
          $\tilde V(\lambda)$}; \addlegendimage{solid,thick};
        \addlegendentry{$\|\F\ftilde_\lambda-f\|_{L_2}$};

        \addplot [mark=*,color=orange] coordinates {( 9.8732019e-05,3.0217060e-01)};
        \addplot [mark=*,color=orange] coordinates {( 2.7339326e-04,3.3541911e-01)};
        \addplot [mark=*,color=orange] coordinates {( 1.3864406e-04,3.1443469e-01)};
        \addplot [mark=*,color=orange] coordinates {( 1.9469040e-04,3.3190453e-01)};

      \end{axis}
    \end{tikzpicture}

    \caption{$L_2([-1,1],(1-x^2)^{-1/2})$ approximation error (black) and cross-validation
      scores (orange)}

  \end{subfigure}

  \caption{Approximation from nonequispaced data: Comparison of the ordinary
    cross-validation score $P(\lambda)$ and the generalized cross-validation
    score $V(\lambda)$ with their approximations $\tilde P(\lambda)$ and
    $\tilde V(\lambda)$ and the approximation error.}
  \label{fig:i_nonequispaced}

\end{figure}
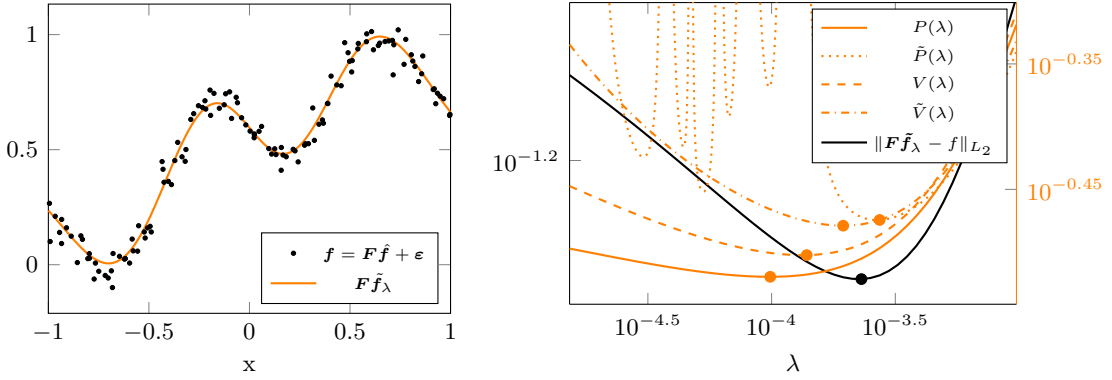

We observe that all cross-validation scores follow the shape of the
$L_{2}([-1,1],(1-x^{2})^{-1/2})$-error and their minima are close to the
optimal $\lambda$. Again, $\tilde P(\lambda)$ is affected by oscillations for
small $\lambda$ which are caused by diagonal entries $h_{x_m,x_m}$ close to $1$.
The computation of the exact $P(\lambda)$ and $V(\lambda)$ averaged over all
$\lambda$ takes $4.07$ seconds whereas the approximated $\tilde P(\lambda)$
and $\tilde V(\lambda)$ outperform this with $0.04$ seconds.

\section{Cross-validation on the two-dimensional sphere}
\label{sec:s2}

Approximation on the two-dimensional sphere
$\S \coloneqq \{\bm x \in \mathbb R^3:\|\bm x\|_2 = 1\}$ has been subject of
mathematical research for a long time. The base for approximation from
scattered data is formed by positive quadrature rules,
Marcinkiewicz-Zygmund inequalities which are investigated in the papers
\cite{ChXu92,MhNaWa01,GiMh07,BoKuPo07}, and by bounds for best approximations
\cite{slwo99,wosl01,HeSl06}. Based on these result the relationship between
the mesh norm, the separation distance of the sampling points, and optimal
approximation rates has been analyzed in the papers
\cite{FiTh06,Ku07,KeKuPo06b,ThBa18}. Approximation from noisy data has been
considered in \cite{AnChSlWo12} and a priori and a posteriori estimates of the
approximation error with respect to the regularization parameter have been
proven in \cite{PeSlTk14}.

Following the approach of the previous sections we again consider the weighted
Tikhonov functional \eqref{eq:tikhonov}. The analogue of the exponential
functions become the spherical harmonics
$\{Y_{n,k}\}_{n=0,\dots,\infty,k=-n,\dots,n}$,
cf.~\cite{frgesc,AtHa12,Mic13,DaXu2013}, which we assume to be normalized such
that they form an orthonormal basis in $L_2(\S)$. For nodes
$\mathcal X\subset\S$ and a maximum polynomial
degree $N \in \mathbb N$ the Fourier matrix $\F$ becomes
\begin{equation*}
  \F = [Y_{n,k}(\bm x)]_{\bm x\in\mathcal X;n=0,\dots,N,k=-n,\dots,n}.
\end{equation*}
As for the weight matrix $\bm{\hat W}$ in Fourier space we consider isotropic
weights
$\bm{\hat W} = \diag(\hat w_{n,k})_{h=0,\ldots,N,\, k=-n,\ldots,n}$
that depend only the polynomial degree, i.e.,
$\hat w_{n,k} = \hat w_{n}$.

\subsection{Exact Quadrature}

There are several approaches to exact quadrature on the two-dimensional sphere. The most
direct approach is to consider tensor products of Gauss quadrature rules on the
circle and the unit interval $[-1,1]$, cf.~\cite[Section 9.6]{PlPoStTa18}. A
relaxation of this idea is to choose the points equally spaced at fixed
latitudinal circles which also allows for an explicit computation of the
quadrature weights, cf.~\cite{ro09}.

A second approach is to choose the quadrature nodes approximately uniform
and determine the quadrature weights by solving a linear system of
equations. Given that the quadrature nodes are sufficiently well separated and
the oversampling factor is sufficiently high, the resulting quadrature
weights can guarantied to be nonnegative, cf.~\cite{MhNaWa01}. The computation
of these quadrature weights can be implemented efficiently using fast
spherical Fourier techniques, cf.~\cite{kupo02,KePo06,nfft3}.

A third approach, called Chebyshev quadrature, consists of fixing the weights
to be constant and seeking quadrature nodes with a high degree of
exactness. The resulting nodes are known as spherical t-designs. Efficient
algorithms for their computation are described in \cite{GrPo10} with the
resulting spherical designs being available at \cite{Gr_pointsS2}.
Finally, one can try to compute both quadrature nodes and weights in an
optimization schema, cf.~\cite{DissGr13}.

For this section it is sufficient that the nodes
$\mathcal X$ and the weights
$\bm W = \diag(w_{\bm x})_{\bm x \in \mathcal X}$ form an exact
quadrature rule of degree $2N$, i.e., $\bm F\herm \bm W \bm F = \mathbf I$.
Under this assumption the diagonal entries of the hat matrix
\begin{equation*}
  \bm H = \F\left(\F\herm \bm W\F+\lambda\bm{\hat W}\right)\inv\F\herm \bm W
\end{equation*}
can by computed efficiently as it is stated in the following theorem.

\begin{theorem}
  \label{theorem:s2_diagonals}
  Let the nodes $\mathcal X$ and the weights $\bm W$ form a quadrature
  formula $Q_{\mathcal X,\bm W}$ that is exact for all spherical harmonics up
  to polynomial degree $2N$ then the diagonal entry corresponding to $\bm x$ of $\bm H$
  satisfies
  \begin{equation*}
    h_{\bm x,\bm x} = \frac{w_{\bm x}}{4\pi}\sum_{n = 0}^N \frac {2n+1}{1+\lambda \hat w_{n}}.
  \end{equation*}
\end{theorem}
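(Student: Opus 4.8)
The plan is to mirror the proofs of Theorem~\ref{theorem:simplification_torus} and Theorem~\ref{theorem:i_diagonals_quadrature}: exactness of the quadrature rule collapses $\F\herm\bm W\F$ to the identity, after which the addition theorem for spherical harmonics collapses the inner sum over the order $k$. First I would argue that for degrees $n_1,n_2\le N$ the function $Y_{n_1,k_1}\overline{Y_{n_2,k_2}}$ is the restriction to $\S$ of a polynomial of degree at most $n_1+n_2\le 2N$, hence lies in the span of spherical harmonics of degree $\le 2N$. Since $Q_{\mathcal X,\bm W}$ is exact on that span and the $Y_{n,k}$ are $L_2(\S)$-orthonormal,
\begin{equation*}
  \left[\F\herm\bm W\F\right]_{(n_1,k_1),(n_2,k_2)}
  = \sum_{\bm x\in\mathcal X} w_{\bm x}\,\overline{Y_{n_1,k_1}(\bm x)}\,Y_{n_2,k_2}(\bm x)
  = \int_{\S}\overline{Y_{n_1,k_1}}\,Y_{n_2,k_2}\;\mathrm d\bm x,
\end{equation*}
which vanishes unless $(n_1,k_1)=(n_2,k_2)$, where it equals $1$. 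Hence $\bm A=\F\herm\bm W\F+\lambda\bm{\hat W}=\bm I+\lambda\bm{\hat W}$, and since the Fourier weights are isotropic, i.e.\ $\hat w_{n,k}=\hat w_n$, its inverse is the diagonal matrix with entries $1/(1+\lambda\hat w_n)$.

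Next, I would insert $\bm A\inv$ into the identity $h_{\bm x,\bm x}=w_{\bm x}\,\F_{\bm x,:}\bm A\inv\F_{\bm x,:}\herm$ from Remark~\ref{remark:cv_canonical_calculation}, which gives
\begin{equation*}
  h_{\bm x,\bm x}
  = w_{\bm x}\sum_{n=0}^{N}\frac{1}{1+\lambda\hat w_n}\sum_{k=-n}^{n}\lvert Y_{n,k}(\bm x)\rvert^2 .
\end{equation*}
The decisive step --- the spherical counterpart of the trivial cancellation $\lvert\e^{2\pi\mathrm i\bm n\bm x}\rvert^2=1$ used on the torus --- is the addition theorem for spherical harmonics. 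In the chosen $L_2(\S)$-orthonormal normalization it reads $\sum_{k=-n}^{n}Y_{n,k}(\bm x)\overline{Y_{n,k}(\bm y)}=\tfrac{2n+1}{4\pi}P_n(\bm x\cdot\bm y)$ with $P_n$ the Legendre polynomial, so evaluating at $\bm y=\bm x$ and using $P_n(1)=1$ yields $\sum_{k=-n}^n\lvert Y_{n,k}(\bm x)\rvert^2=\tfrac{2n+1}{4\pi}$. Substituting this into the last display immediately produces the asserted formula.

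The one point needing care --- and the place I would slow down --- is the bookkeeping of the normalization constant: the factor $\tfrac1{4\pi}$ in the statement is precisely the one emitted by the addition theorem under the convention that the $Y_{n,k}$ are orthonormal in $L_2(\S)$ (equivalently $\int_{\S}1\;\mathrm d\bm x=4\pi$). A different scaling of the spherical harmonics would rescale both this constant and the entries of $\F\herm\bm W\F$, and one should check that these changes cancel consistently. Everything else is a direct transcription of the torus argument, so I do not expect any genuine obstacle.
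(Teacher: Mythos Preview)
Your proposal is correct and follows essentially the same route as the paper: first use exactness of the quadrature to obtain $\F\herm\bm W\F=\bm I$ and hence $\bm A\inv=\diag\bigl(1/(1+\lambda\hat w_n)\bigr)$, then apply the addition theorem for spherical harmonics to collapse the inner sum over $k$. If anything, you supply more detail than the paper does---the explicit argument that $Y_{n_1,k_1}\overline{Y_{n_2,k_2}}$ has degree $\le 2N$ and the normalization discussion are both spelled out more carefully than in the original proof.
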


\begin{proof}
  Since $\F\herm \bm W\F = \bm I$ we obtain analogously to the proof of
  Theorem~\ref{theorem:simplification_torus}
  \begin{equation*}
    \label{eq:s2_hatmatrix}
    \bm H
    = \F\diag\left(\frac 1{1+\lambda \hat w_0},\dots,\frac 1{1+\lambda \hat w_N}\right)\F\herm \bm W.
  \end{equation*}
  Looking into the diagonal entry corresponding to $x$ and using the addition theorem of
  spherical harmonics, cf.~\cite[Theorem 5.11]{Mic13}, we obtain the formula
  \begin{equation*}
    h_{\bm x,\bm x}
    = w_{\bm x}\sum_{n=0}^N \frac 1{1+\lambda \hat w_n} \sum_{k=-n}^n
    Y_{n,k}(\bm x)\overline{Y_{n,k}(\bm x)}
    = \frac {w_{\bm x}}{4\pi}\sum_{n=0}^N \frac {2n+1}{1+\lambda \hat w_n}.
  \end{equation*}
\end{proof}

\begin{corollary}
\label{corollary:s2_fast}
  For fixed $\lambda$ the ordinary cross-validation score $P(\lambda)$ and the
  generalized cross-validation score $V(\lambda)$ on the two-dimensional sphere given
  quadrature nodes and weights can be computed in $\mathcal O(N^2\log N+|\mathcal X|)$
  using Algorithm~\ref{algo:cv}.
\end{corollary}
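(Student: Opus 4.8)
The plan is to follow the proof of Corollary~\ref{corollary:i_fast_exact} and account for the arithmetic cost of each of the three steps of Algorithm~\ref{algo:cv} under the standing hypothesis $\bm F\herm\bm W\bm F = \bm I$. The decisive point, already used in the proof of Theorem~\ref{theorem:s2_diagonals}, is that this exact-quadrature assumption turns $\bm A = \bm F\herm\bm W\bm F + \lambda\bm{\hat W} = \bm I + \lambda\bm{\hat W}$ into a diagonal matrix, so $\bm A\inv$ is available in closed form as the entrywise reciprocal $1/(1+\lambda\hat w_n)$ and no iterative linear solve is needed.

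For step~1 I would evaluate $\ftilde = \bm H\f = \bm F\bm A\inv\bm F\herm\bm W\f$ from right to left: multiplication by the diagonal matrix $\bm W$ costs $\mathcal O(|\mathcal X|)$; the adjoint and the forward nonequispaced spherical Fourier transform each cost $\mathcal O(N^2\log N + |\mathcal X|)$ arithmetic operations, cf.~\cite{KeKuPo06b,nfft3}; and multiplication by the diagonal matrix $\bm A\inv$ costs $\mathcal O(|\mathcal I|) = \mathcal O(N^2)$ because $|\mathcal I| = \sum_{n=0}^N(2n+1) = (N+1)^2$. For step~2 I would invoke the explicit formula of Theorem~\ref{theorem:s2_diagonals}: the scalar $\sum_{n=0}^N (2n+1)/(1+\lambda\hat w_n)$ is computed once in $\mathcal O(N)$ operations, after which $h_{\bm x,\bm x}$ and their mean $h$ follow for all $\bm x\in\mathcal X$ in $\mathcal O(|\mathcal X|)$ operations. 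Step~3 forms the residual $\ftilde-\f$ and the two sums defining $P(\lambda)$ and $V(\lambda)$ in $\mathcal O(|\mathcal X|)$ operations. Adding the three contributions, the spherical Fourier transforms in step~1 dominate, giving the claimed total of $\mathcal O(N^2\log N + |\mathcal X|)$.

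I do not anticipate a genuine obstacle; the only point requiring care is to cite the correct complexity of the (nonequispaced) fast spherical Fourier transform, which for arbitrary $\mathcal X\subset\S$ is $\mathcal O(N^2\log N + |\mathcal X|)$ up to the accuracy-dependent constants of the NFFT-type algorithm, cf.~\cite{KeKuPo06b,nfft3}, while for the tensor-structured quadrature grids discussed in this section one even has a direct fast algorithm of the same order. In both cases the remaining steps contribute only lower-order terms, so the bound stated in the corollary holds.
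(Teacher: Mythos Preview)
Your proposal is correct and follows essentially the same argument as the paper: both rely on the diagonality of $\bm A = \bm I + \lambda\bm{\hat W}$ under exact quadrature, invoke Theorem~\ref{theorem:s2_diagonals} for the diagonal entries, and reduce the cost of applying $\bm H$ to two (nonequispaced) fast spherical Fourier transforms of complexity $\mathcal O(N^2\log N + |\mathcal X|)$. Your write-up merely gives a more explicit step-by-step cost accounting of Algorithm~\ref{algo:cv} than the paper's terse version.
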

\begin{proof}
  Due to Theorem~\ref{theorem:s2_diagonals} we can compute $h_{\bm x,\bm x}$ in linear
  time.  Using equation \eqref{eq:s2_hatmatrix} applying the hat matrix has
  the same computational cost as one multiplication with $\F$ and one with
  $\F\herm$.  Using the nonequispaced fast spherical Fourier transform (NFSFT,
  cf.~\cite{kupo02}) this can be done in $\mathcal O(N^2\log N+|\mathcal X|)$.
\end{proof}

\label{example:S2quadrature}
In order to illustrate Theorem~\ref{theorem:s2_diagonals} we consider a
quadrature rule consisting of $21\,000$ approximately equidistributed nodes
and equal weights $w_{\bm x} = 4\pi/21\,000$ that is exact up to polynomial
degree $2N = 200$, as reported in \cite{Gr_pointsS2}. Since by Theorem
\ref{theorem:s2_diagonals} the diagonal entries $h_{\bm x,\bm x}$ of the hat matrix
are constant multiples of the constant spatial weights $w_{\bm x}$ the
ordinary cross-validation score and the generalized cross-validation score
coincide for this setting. For weights in frequency domain we have chosen
$\hat w_n = (2n)^{2s}$ for $s=3$ which corresponds to a function with $3$
derivatives in $L_2(\S)$.

The test function consists of a sum of quadratic B-splines to which we added
an error of 5\% Gaussian noise for each node as one can see in
Figure~\ref{fig:quadrature_s2}, (a). This function was suggested in
\cite{HiQu15}.  We calculated $V(\lambda)$ and $P(\lambda)$ for
$\lambda\in[2^{-38},2^{-25}]$ using Algorithm~\ref{algo:cv} with the help of
the \textsc{Matlab} toolbox \textsc{MTEX}, cf.~\cite{mtex51}. Furthermore we
calculated the $L_2(\S)$-error using Parseval from the original $\fhat$ and
$\ftilde$ which are a byproduct of Algorithm~\ref{algo:cv}.

\begin{figure}
  \centering

  \begin{subfigure}[b]{0.25\textwidth}
    \includegraphics[width=\textwidth]{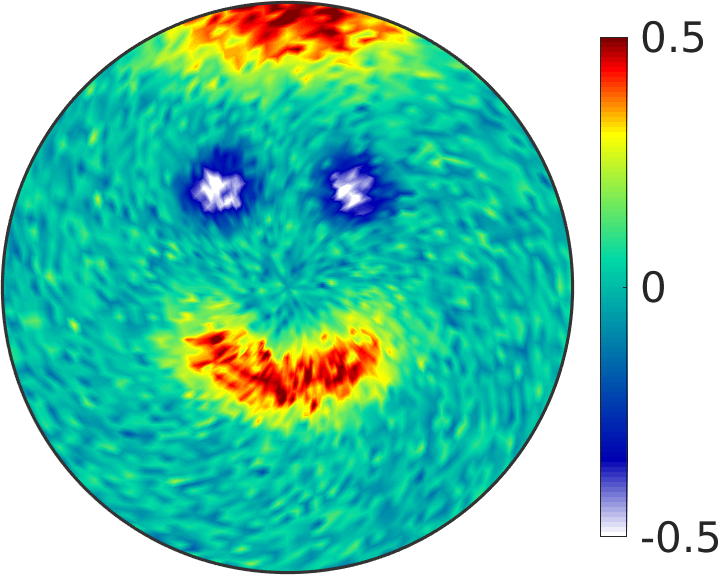}
    \\ \\
    \includegraphics[width=\linewidth]{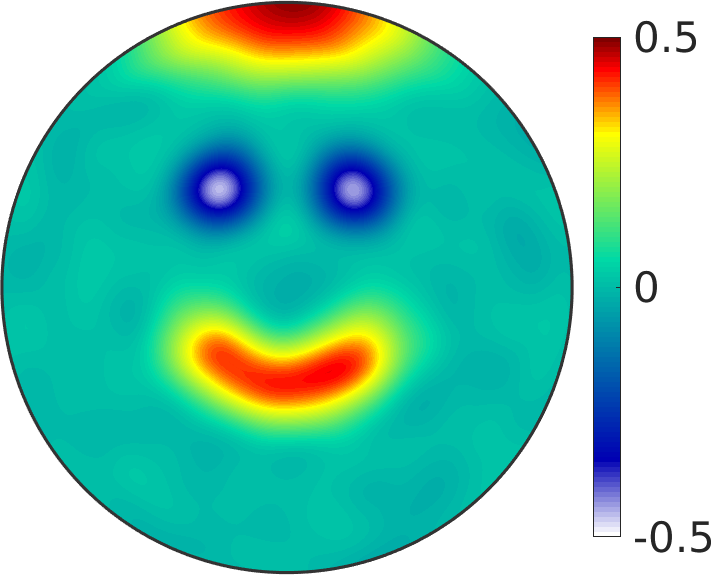}
    \caption{noisy input data and reconstruction}
  \end{subfigure}
  \quad
  \begin{subfigure}[b]{0.5\textwidth}
\begin{tikzpicture}[font = \footnotesize]
  \begin{axis}[
    scale only axis,
    width             = 0.8\textwidth, height = 4cm,
    xlabel            = $\lambda$,
    xmode             = log,
    ymode             = log,
    axis y line*      = left,
    enlarge x limits  = {abs = 0},
    legend pos        = north east,
    legend cell align = left,
    ytick             = {0.1,0.0631},legend style={font=\tiny},
  ]
    \addplot[no marks,thick] table[x index = 0, y index = 1] {data/s2_quadrature.dat};
    \addlegendentry{$\lVert \tilde \f_{\lambda} - \hat \f \rVert_{2}$};
    \addlegendimage{solid,orange,thick};
    \addlegendentry{$P(\lambda)$};
    \addplot [mark=*] coordinates {(4.6566129e-10,4.3514514e-02)};

  \end{axis}
  \begin{axis}[
    scale only axis,
    width             = 0.8\textwidth, height = 4cm,
    xmode             = log,
    ymode             = log,
    axis y line*      = right,
    axis x line       = none,
    axis line style   = {orange},
    yticklabel style  = {color = orange},
    enlarge x limits  = {abs = 0},
    ytick             = {213.7962,223.8721}]
    \addplot[no marks,orange,thick] table[x index = 0, y index = 2] {data/s2_quadrature.dat};
    \addplot [mark=*,color=orange] coordinates {(6.5854451e-10,2.1087411e+02)};
  \end{axis}
\end{tikzpicture}
    \caption{approximation error
      $\lVert \tilde \f_{\lambda} - \hat \f \rVert_{2}$ (black) and cross-validation
      scores (orange)}
\end{subfigure}

  \caption{Approximation from two-dimensional equispaced data: Comparison
    of the cross-validation score $P(\lambda)$ and the approximation error.}
\label{fig:quadrature_s2}
\end{figure}

As it is illustrated in the Figure~\ref{fig:quadrature_s2}, (b) the minimum of
the cross-validation score is very close to the minimum of the approximation
error.

\subsection{Approximative quadrature}
\label{sec:appr-quadr-S}

In the case function values are provided at nodes not forming a suitable
quadrature rule we follow the previous ideas of Section~\ref{subs:torus_app} and \ref{sec:appr-quadr}
and use the approximated hat matrix $\bm{\tilde H}$ from \eqref{eq:appr_hat} instead of $\bm H$ itself.
This way we acquire $\tilde P(\lambda)$ and $\tilde V(\lambda)$ as in Definition~\ref{def:cv_tilde}.
In place of quadrature weights we use a spherical Voronoi decomposition,
cf.~\cite{renka97}.

The only changes to Algorithm~\ref{algo:cv} are the prior computation of the
Voronoi weights and the necessity of solving a linear system of equations for
computing the Tikhonov minimizer $\fht$. This system of linear
  equation was solved with a conjugate gradient method, as implemented
  in the \textsc{Matlab} \texttt{lsqr} command, limited to 20
  iterations.

  In order to illustrate the efficiency of approximative quadrature weights
  for estimating the cross-validation score we consider the same test
  function and $\bm{\hat W}$ as in Example~\ref{example:S2quadrature} and apply Algorithm~\ref{algo:cv}
  with polynomial degree $N=30$ to $|\mathcal X|=2(N+1)^2=1922$ random nodes, which
  corresponds to an oversampling factor of two. Figure~\ref{fig:approximation_s2}
  compares the different cross-validation scores
  $P(\lambda)$, $V(\lambda)$, $\tilde P(\lambda)$ and $\tilde V(\lambda)$ for $\lambda\in[2^{-38},2^{-25}]$.
  The interval has been choosen by hand in such a way that it reflects the characteristic features of the functionals.
  All scores have their minimum close to the
  minimum of the of the actual approximation error.
  On the downside, we again observe several peaks in the
  approximated ordinary cross-validation score for small values of
  $\lambda$. So it is important to start the minimization process with a large
  $\lambda$.
  We also want to note that the computation of the exact $P(\lambda)$ and  $V(\lambda)$ took $227$ seconds averaged over all $\lambda$ in contrast to $0.12$ seconds for the approximated $\tilde P(\lambda)$ and $\tilde V(\lambda)$.

\begin{figure}
  \centering

  \begin{subfigure}[b]{0.25\textwidth}
    \includegraphics[width=\textwidth]{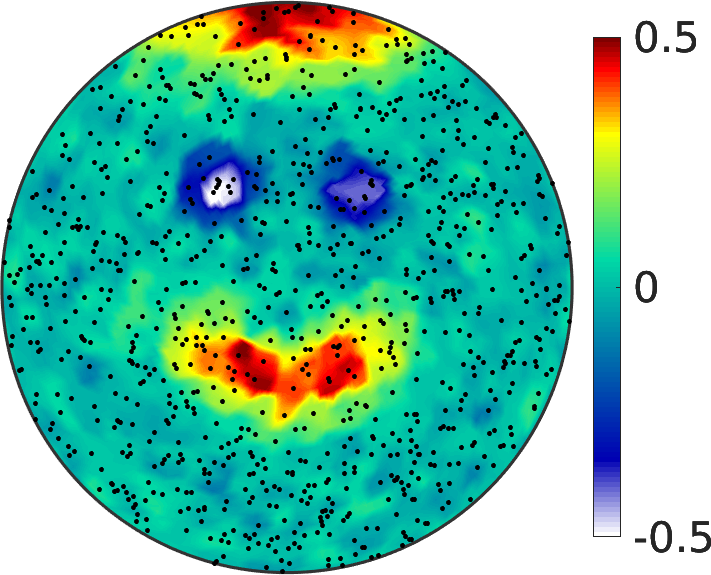}
    \\ \\
    \includegraphics[width=\linewidth]{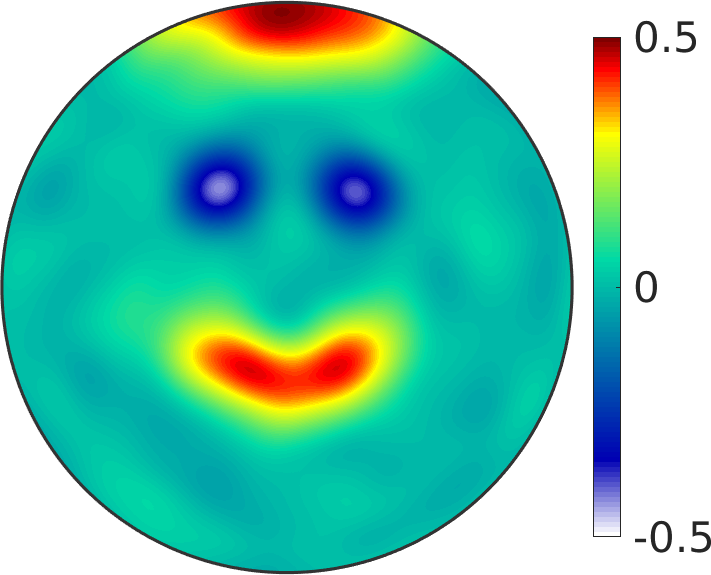}
    \caption{noisy input data and reconstruction}
  \end{subfigure}
  \quad
  \begin{subfigure}[b]{0.5\textwidth}
\begin{tikzpicture}[font = \footnotesize]
  \begin{axis}[
    scale only axis,
    width             = 0.8\textwidth, height = 4cm,
    xlabel            = $\lambda$,
    xmode             = log,
    xmin = 10^-10.5, xmax = 10^-7.6,
    ymode             = log,
    axis y line*      = left,
    enlarge x limits  = {abs = 0},
    legend pos        = north east,
    legend cell align = left,
    legend style      = {at = {(1.2,0.5)},anchor = west},
  ]
    \addplot[no marks] table[x index = 0, y index = 1] {data/s2_approximation.dat};
    \addplot [mark=*] coordinates {( 1.5718406e-09,7.5805484e-02)};

  \end{axis}
  \begin{axis}[
    scale only axis,
    width             = 0.8\textwidth, height = 4cm,
    ymax              = 7,
    xmode             = log,
    xmin = 10^-10.5, xmax = 10^-7.6,
    ymode             = log,
    axis y line*      = right,
    axis x line       = none,
    axis line style   = {orange},
    yticklabel style  = {color = orange},
    enlarge x limits  = {abs = 0},
    legend style={font=\tiny},
  ]
    \addplot[no marks,orange,thick] table[x index = 0, y index = 2] {data/s2_approximation.dat};
    \addplot[no marks,orange,dotted,thick] table[x index = 0, y index = 1] {data/s2_approximation_ocv_appr.dat};
    \addplot[no marks,orange,dashed,thick] table[x index = 0, y index = 4] {data/s2_approximation.dat};
    \addplot[no marks,orange,dashdotted,thick] table[x index = 0, y index = 5] {data/s2_approximation.dat};

    \legend{$P(\lambda)$, $\tilde P(\lambda)$, $V(\lambda)$,
      $\tilde V(\lambda)$}; \addlegendimage{solid,thick};
    \addlegendentry{$\lVert \tilde \f_{\lambda} - \hat \f \rVert_{2}$};

    \addplot [mark=*,color=orange] coordinates {( 1.0881238e-09,5.4867624e+00)};
    \addplot [mark=*,color=orange] coordinates {( 1.3078062e-09,5.5247181e+00)};
    \addplot [mark=*,color=orange] coordinates {( 1.0881238e-09,5.5154147e+00)};
    \addplot [mark=*,color=orange] coordinates {( 1.3078062e-09,5.5548687e+00)};

  \end{axis}
\end{tikzpicture}
    \caption{approximation error
      $\lVert \tilde \f_{\lambda} - \hat \f \rVert_{2}$ (black) and cross-validation
      scores (orange)}
\end{subfigure}

\caption{Approximation from two-dimensional random nodes: Comparison of the ordinary cross-validation score $P(\lambda)$ and the generalized cross-validation score $V(\lambda)$ with their approximations $\tilde P(\lambda)$ and $\tilde V(\lambda)$ and the approximation error.}
\label{fig:approximation_s2}
\end{figure}

\section{Conclusion}

In this paper we presented a fast algorithm for the computation of the
leave-one-out cross-validation score $P(\lambda)$ for the Tikhonov regularizer
\eqref{eq:tikhonov}. While many approximations of the
  cross-validation score do not consider every node separately, but fewer and
  larger validation sets, c.f.\,\cite{CY10}, we considered the full leave-one-out
  cros-validation score. In contrast to other approaches we did not
restrict ourselves to spline interpolation on the interval at equispaced nodes,
but considered more general domains and samplings. The key points of
Algorithm~\ref{algo:cv} are explicit formulas for the diagonal elements
$h_{x,x}$ of the hat matrix $\bm H$ which we were able to derive in the
Theorems \ref{theorem:simplification_torus}, \ref{theorem:i_diagonals}, and
\ref{theorem:s2_diagonals}, for approximation on the torus, the interval, and
the two-dimensional sphere, respectively. Generalizations to other domains,
e.g., the rotation group $SO(3)$, are possible following the framework
presented in this paper. For all these domains FFT-like algorithms can be
applied to achieve quasilinear complexity with respect to the number of nodes
for the computation of the Tikhonov minimizer as well as for the leave-one-out
cross-validation score.

The efficiency of our approach has been illustrated in several numerical
experiments with respect to the different domains. For the nodes we
distinguished two settings.
For nodes belonging to a quadrature rule, like equispaced nodes or rank-1 lattices on the torus, our Algorithm~\ref{algo:cv} computes the cross-validation score $P(\lambda)$ with floating point precision, cf.~Corollaries~\ref{corollary:t_fast_exact},
\ref{corollary:i_fast_exact}, and \ref{corollary:s2_fast}. For arbitrary
nodes we accomplished in Remarks~\ref{remark:approximationisfast},
\ref{remark:i_fast_appr} and Corollary~\ref{corollary:s2_fast} a good
approximation using Voronoi weights in place of the quadrature weights. The
numerical experiments confirm our theoretical results. In all test scenarios
our algorithm was several orders of magnitude faster then the direct reference
implementation.

In some cases the approximated leave-one-out cross-validation score
$\tilde P(\lambda)$ suffered from peaks for $\lambda$ smaller than the optimal
one, cf.~Subsection~\ref{subs:torus_app}. Anyway, in our test cases we had no
problems finding the global minimum by initializing the line search algorithm
with a sufficiently large $\lambda$ and thus avoiding the oscillatory region.

All relevant \textsc{Matlab} code, including the algorithm for the fast computation of the
leave-out-one cross-validation score, its minimizer and all numerical examples
of this paper can be found on the GitHub repository
\texttt{https://github.com/felixbartel/fcv}.

\bibliographystyle{abbrv}

\end{document}